\documentclass[hidelinks,onefignum,onetabnum]{siamart251216}

\usepackage{amssymb,mathtools}
\usepackage{enumitem}
\usepackage{mathrsfs}
\usepackage{microtype}

\numberwithin{theorem}{section}
\makeatletter
\newcommand{\siamthmsep}[2]{%
  \expandafter\let\csname #1\endcsname\relax
  \expandafter\let\csname end#1\endcsname\relax
  \expandafter\let\csname #1*\endcsname\relax
  \expandafter\let\csname end#1*\endcsname\relax
  \expandafter\let\csname the#1\endcsname\relax
  \theoremstyle{plain}\theoremheaderfont{\normalfont\sc}%
  \theorembodyfont{\normalfont\itshape}\theoremseparator{.}\theoremsymbol{}%
  \newtheorem{#1}{#2}[section]}
\newcommand{\siamremsep}[2]{%
  \expandafter\let\csname #1\endcsname\relax
  \expandafter\let\csname end#1\endcsname\relax
  \expandafter\let\csname #1*\endcsname\relax
  \expandafter\let\csname end#1*\endcsname\relax
  \expandafter\let\csname the#1\endcsname\relax
  \theoremstyle{plain}\theoremheaderfont{\normalfont\itshape}%
  \theorembodyfont{\normalfont}\theoremseparator{.}\theoremsymbol{}%
  \newtheorem{#1}{#2}[section]}
\makeatother
\siamthmsep{lemma}{Lemma}
\siamthmsep{proposition}{Proposition}
\siamthmsep{corollary}{Corollary}
\siamthmsep{definition}{Definition}
\siamthmsep{assumption}{Assumption}
\siamremsep{remark}{Remark}

\numberwithin{equation}{section}

\DeclareMathOperator{\Tr}{Tr}
\DeclareMathOperator{\dist}{dist}

\newcommand{\RR}{\mathbb R}
\newcommand{\EE}{\mathbb E}
\newcommand{\PP}{\mathbb P}
\newcommand{\cA}{\mathcal A}
\newcommand{\cD}{\mathcal D}

\newcommand{\cV}{\mathcal V}
\newcommand{\GammaD}{\Gamma}

\headers{Target-Generated Dirichlet Problems}{E. Bayraktar, H. Ding and I. Ekren}

\title{Target-Generated Dirichlet Problems in State-Constrained Stochastic Control%
\thanks{\textbf{Funding:} The first author was supported in part by National
Science Foundation grant DMS-2507940.  The third author was supported in part
by National Science Foundation grant DMS-2406240.}}
\author{Erhan Bayraktar\thanks{Department of Mathematics, University of Michigan,
530 Church Street, Ann Arbor, MI 48109-1043
(\email{erhan@umich.edu}, \email{hchding@umich.edu}, \email{iekren@umich.edu}).}
\and Haichuan Ding\footnotemark[2]
\and Ibrahim Ekren\footnotemark[2]}

\ifpdf
\hypersetup{
  pdftitle={Target-Generated Dirichlet Problems in State-Constrained Stochastic Control},
  pdfauthor={Erhan Bayraktar, Haichuan Ding and Ibrahim Ekren}
}
\fi

\begin{document}
\maketitle

\begin{abstract}
We study state-constrained stochastic control problems with a scalar surplus
$R\ge0$.  Controls for which the drift and volatility of $R$ vanish at zero
define a lower-dimensional Hamilton--Jacobi--Bellman equation.  Starting from
control-wise state-constraint viscosity inequalities, we prove that the upper
and lower limits of the interior value are a subsolution and a supersolution
of this boundary equation.  Terminal compatibility and comparison identify
their common limit.  We require one-sided bounds on positive surplus drift and generator
growth, together with domination by boundary generators. For compact
or coercive controls, these follow from a local lower bound and
continuity of the boundary control set. For a smooth stochastic target
value $w$, the transformation $R=Y-w(t,X)$ flattens the viable epigraph,
and, under the stated hypotheses, the boundary control value supplies
the Dirichlet datum. Applications include set-valued boundary controls,
unbounded drift with quadratic costs, and redundant hedging instruments
with a nonconstant target boundary.
\end{abstract}

\begin{keywords}
state-constrained stochastic control, stochastic target problems, Dirichlet
problems, viscosity solutions, Hamilton--Jacobi--Bellman equations
\end{keywords}

\begin{MSCcodes}
93E20, 49L25, 49L20, 60H30
\end{MSCcodes}

\section{Introduction}

This paper studies the boundary values of state-constrained stochastic
control problems on the half-space $\RR^d\times[0,\infty)$. The controls
that keep the constrained coordinate equal to zero define a
lower-dimensional control problem on $\RR^d\times\{0\}$. We give
coefficient conditions under which the interior value converges to
the value of this boundary control problem, thereby converting the
state-constrained HJB problem into a Dirichlet problem. An important
source of these problems is a stochastic target constraint, in which
the boundary describes the minimal resource needed to meet a terminal
target.

The controlled state is $(X,R)$, where $X\in\RR^d$ and the scalar process $R$
is constrained to remain nonnegative; the Brownian motion and the controls
may be multidimensional.  The dynamics have the form
\[
\begin{aligned}
 dX_s &= b(s,X_s,R_s,a_s)\,ds+
        \sigma(s,X_s,R_s,a_s)\,dW_s,\\
 dR_s &= \mu(s,X_s,R_s,a_s)\,ds+
        \eta(s,X_s,R_s,a_s)\,dW_s,
\end{aligned}
\]
started from $(X_t,R_t)=(x,r)$.  A control is admissible only if
$R_s\ge0$ for all $s\in[t,T]$.
For $r>0$, the constrained value has the form
\[
  V(t,x,r)=\sup_a
  \EE\left[\int_t^T f(s,X_s,R_s,a_s)\,ds+\Phi(X_T,R_T)\right],
\]
where the supremum is over the state-constrained controls.  Dynamic programming
gives an HJB equation in the open set $r>0$:
\[
\begin{cases}
 -\partial_t u-H(t,x,r,Du,D^2u)=0,
      &(t,x,r)\in[0,T)\times\RR^d\times(0,\infty),\\
 u(T,x,r)=\Phi(x,r),
      &r>0.
\end{cases}
\]
Here $H(t,x,r,p,M)$ is the supremum over $a\in A$ of the generator of $(X,R)$
under the control $a$, applied to the jet $(p,M)$ in the variables $(x,r)$,
plus the running reward $f^a$; the equation involves $u_r$, $u_{rr}$, and the
mixed derivatives $D_xu_r$.  We seek the limit of $V$ as $r\downarrow0$.  The
controls that preserve the boundary form the set
\[
 A_0(t,x):=\{a\in A:\mu^a(t,x,0)=0,\ \eta^a(t,x,0)=0\}.
\]
Their remaining drift, covariance, and reward define the boundary Hamiltonian
$H_0$ in \eqref{eq:boundary-Hamiltonian} and the boundary control problem with
value $\cV(t,x)$.  Our theorem gives conditions under which
$\lim_{r\downarrow 0}V(t,x,r)=\cV(t,x)$, which supplies the endogenous Dirichlet condition
$u(t,x,0)=\cV(t,x)$ for the interior equation.  For example, multiplicative
surplus dynamics $dR_s=R_s\lambda_s\,ds+R_s\theta_s\,dW_s$ give
$\mu^a=r\lambda^a$ and $\eta^a=r\theta^a$, so the surplus drift and volatility
vanish on the state-constraint boundary.  In hedging problems with redundant
instruments, the surplus volatility fixes the aggregate hedge but leaves its
allocation among the instruments undetermined, while an excess return on the
underlying makes positive surplus drift available at the price of volatility.

Stochastic target problems provide a natural source of this half-space
structure. For controlled dynamics $(X,Y)$ as in
Section~\ref{sec:original-variables}, let $Y$ represent wealth, reserves,
or continuation utility. The minimal resource needed to meet the target
$Y_T\ge\Psi(X_T)$ is
\begin{equation}\label{eq:w}
  w(t,x):=\inf\{y:\exists \alpha
     \hbox{ such that }Y_T^{t,x,y,\alpha}\ge
       \Psi(X_T^{t,x,\alpha})\hbox{ a.s.}\}.
\end{equation}
When feasibility is monotone in the initial resource and the infimum
in \eqref{eq:w} is attained, the viable set is
$D=\{(t,x,y):y\ge w(t,x)\}$.

For a continuation problem constrained to this viable epigraph, start
from $Y_t=w(t,x)+r$ and introduce $R_s=Y_s-w(s,X_s)$, so that the
viability constraint becomes $R_s\ge0$. We call the resulting half-space
control problem \emph{target-generated} when the target and continuation
problems use the same controlled dynamics, control set, and admissibility
restrictions, possibly with different objectives.

\subsection{Literature positioning}

\paragraph{State constraints and stochastic targets}
Soner and Touzi introduced the dynamic programming and viscosity framework for
stochastic target problems \cite{SonerTouzi2002STP,SonerTouzi2002Flows}.

Bouchard, Elie and Imbert \cite[Section 3.3]{BEI2010} derive a Dirichlet
condition for a smooth target boundary under a unique locally Lipschitz
zero-volatility control (Assumption 3.3) and a local lower bound
(Assumption 3.4).  Here several controls may preserve the boundary, and the
boundary equation optimizes over all of them.  Our Corollary~\ref{cor:BEI}
recovers their invariant-boundary reduction from their full-Hamiltonian
subsolution inequality, which holds up to the boundary, and dynamic programming.

Bouchard, Gnecco Heredia, Moreau and Pham \cite{BGHMP2026} also derive a
boundary HJB by optimizing over a set of boundary-preserving controls, for an
expectation constraint with a compact state control and a uniformly elliptic
diffusion that yields a smooth boundary.  Their surplus drift is nonpositive
and the unbounded martingale control does not enter the state generator. Our Remark~\ref{rem:BGMP} recovers their Theorem 2.13 under their Assumption 2.10.

\paragraph{Fixed-domain Dirichlet theory}
Fixed-domain results provide the comparison and uniqueness background: the
controlled Markov process and viscosity framework of Fleming and Soner
\cite{FlemingSoner2006}, continuity of exit-time values under weak conditions
\cite{BayraktarSongYang2011}, identification of the exit-time value with a
Dirichlet HJB solution by stochastic Perron's method \cite{Rokhlin2014}, and
nonlinear Dirichlet solvability through a Perron supersolution
\cite{BayraktarSong2018}.  Our Section~\ref{sec:perron} also adapts stochastic Perron's method,
introduced for HJB equations by Bayraktar and S\^{\i}rbu
\cite{BayraktarSirbu2013}, to the boundary control problem.

\paragraph{Principal-agent problems}
Alvarez, Ekren and Huang \cite{AEH2026} provide a general approach to
principal-agent problems with adverse selection in which the credible
set is characterized through stochastic target problems. In the
risk-neutral, $X$-independent reduction of their Section~6, this set is
described by the lower and upper boundaries $\underline W(t)$ and
$\overline W(t)$. Our Corollary~\ref{cor:AEH} recovers their endpoint
equations by applying our theorem to each endpoint surplus. Our
Section~\ref{sec:examples} applies the same theorem to controlled drift
and hedging problems outside that principal-agent model.

\paragraph{Contribution}
The local lower bound of Bouchard, Elie and Imbert \cite{BEI2010} and the
signed endpoint estimates of Alvarez, Ekren and Huang \cite{AEH2026}
express a common restriction on positive surplus creation relative to
volatility: $\mu^+\le C(r+|\eta|+|\eta|^2)$ near the boundary.
At $r=0$, Lemma \ref{lem:drift-equivalence} equates this estimate with the
lower-bound component of Assumption 3.4 in \cite{BEI2010}; the endpoint
estimates in \cite{AEH2026} imply it nearby.  Theorem
\ref{thm:boundary-continuity} combines this restriction with generator bounds
to identify the boundary equation without requiring a unique boundary control.

\paragraph{Map of results and examples}
\begin{enumerate}[label=(\roman*)]
\item Theorem \ref{thm:boundary-continuity} proves, under the terminal limit and
comparison for the equation on $R=0$, that $V(t,x,r)\to v_0(t,x)$ as
$r\downarrow0$.
Under its additional feedback, growth, and stochastic sub- and supersolution
hypotheses, Theorem \ref{prop:perron} identifies the boundary control value
$\cV$ with that solution by stochastic Perron's method, so $v_0=\cV$.
\item For compact or coercive controls, Proposition \ref{prop:compact-coercive}
reduces the three coefficient conditions to the local lower bound of Bouchard, Elie and Imbert
\cite{BEI2010} and continuity of the boundary control set.  This case contains
their smooth invariant-boundary reduction.  Section \ref{sec:compact-controls}
and Proposition \ref{prop:redundant-hedging} have a continuum of boundary
controls, so their singleton assumption fails; the latter also has a
nonconstant target boundary.
\item For noncompact, noncoercive controls, the hypotheses are verified from
signed coefficient estimates, and Lemma \ref{prop:coefficient-set-test}
provides a coefficient-set test.  This covers the two endpoints in Alvarez,
Ekren and Huang \cite{AEH2026}, with the precise reduction in Corollary
\ref{cor:AEH}.  Section \ref{sec:noncompact-controls} gives an unbounded-control
example outside both the singleton and unbounded-control hypotheses of
Bouchard, Elie and Imbert \cite[Assumption 3.3(ii) and (3.11)]{BEI2010}.
\end{enumerate}

Section \ref{sec:failure-lower-bound} shows that the boundary identification
can fail if the local lower bound is omitted, even for a smooth target value.
The bound is not necessary for equality in every individual problem.

The rest of the paper is organized as follows.  Section
\ref{sec:half-space-problem} contains the results and proofs, Section
\ref{sec:original-variables} gives the verification from target dynamics,
Section \ref{sec:examples} presents the examples, and Section
\ref{sec:conclusion} concludes.  Appendix \ref{app:terminal-limit} proves the
terminal-limit criterion used in Section \ref{sec:half-space-problem}.

\section{The State-Constrained Half-Space Problem}\label{sec:half-space-problem}

\subsection{State constraint, boundary dynamics, and target generation}

We first state the result for a controlled half-space problem. In a
target-generated application, $R=Y-w(t,X)$ and $R=0$ is the image of the
graph $Y=w(t,X)$. The half-space domain is
$\cD=[0,T)\times\RR^d\times[0,\infty)$, with interior
$\cD^\circ=[0,T)\times\RR^d\times(0,\infty)$ and boundary
$\GammaD=[0,T)\times\RR^d\times\{0\}$.

Let $(\Omega,\mathcal F,\mathbb F,\PP)$ be a filtered probability space
satisfying the usual conditions, with
$\mathbb F=(\mathcal F_s)_{s\in[0,T]}$, and let $W$ be an
$\mathbb F$-Brownian motion. The controlled dynamics are
\begin{align}
 dX_s &= b(s,X_s,R_s,a_s)\,ds+
        \sigma(s,X_s,R_s,a_s)\,dW_s,\label{eq:X}\\
 dR_s &= \mu(s,X_s,R_s,a_s)\,ds+
        \eta(s,X_s,R_s,a_s)\,dW_s.\label{eq:R}
\end{align}
For an initial point $(t,x,r)$ with $r\ge0$, let $\cA_{t,x,r}$
be the family of $A$-valued, $\mathbb F$-progressively measurable
controls $a=(a_s)_{s\in[t,T]}$ for which the system
\eqref{eq:X}--\eqref{eq:R}, started from $(X_t,R_t)=(x,r)$,
has an $\mathbb F$-adapted continuous solution satisfying,
almost surely, $R_s\ge0$ for all $s\in[t,T]$.
For $r>0$, define the state-constrained value by
\begin{equation*}
  V(t,x,r)=\sup_{a\in\cA_{t,x,r}}
  \EE\left[\int_t^T f(s,X_s,R_s,a_s)\,ds+
  \Phi(X_T,R_T)\right].
\end{equation*}
Write $V^*$ and $V_*$ for the upper and lower semicontinuous envelopes obtained
by approaching the closure of the half-space from $r>0$.
The controls that keep the surplus equal to zero are
\begin{equation}\label{eq:A0}
 A_0(t,x):=
 \{a\in A:\eta(t,x,0,a)=0,\ \mu(t,x,0,a)=0\}.
\end{equation}
For
$a^0_s\in A_0(s,\bar X_s)$, the boundary dynamics are written as
\begin{equation}\label{eq:X-boundary}
 d\bar X_s=b(s,\bar X_s,0,a^0_s)\,ds+
        \sigma(s,\bar X_s,0,a^0_s)\,dW_s,
        \qquad a^0_s\in A_0(s,\bar X_s).
\end{equation}
Using the boundary dynamics \eqref{eq:X-boundary}, the boundary value is
\begin{equation}\label{eq:boundary-value}
  \cV(t,x)=\sup_{a^0\in\cA^0_{t,x}}
  \EE\left[\int_t^T f(s,\bar X_s,0,a^0_s)\,ds+
  \Phi(\bar X_T,0)\right].
\end{equation}
The desired endogenous Dirichlet condition is
$\lim_{r\downarrow0}V(t,x,r)=\cV(t,x)$; the rest of the section gives
conditions for it, and Section \ref{sec:original-variables} derives the
coefficients from a target problem in the variables $(X,Y)$.

\subsection{The boundary equation}

The main result identifies the boundary value from an upper bound on the
positive surplus drift, a growth bound on the generator, and domination of
the generators selected by the boundary penalization by the boundary
Hamiltonian.  At zero surplus, the drift condition is equivalent to the
local lower bound in Assumption 3.4 of Bouchard, Elie and Imbert
\cite{BEI2010}.

\begin{assumption}[Controlled diffusion]\label{ass:basic}
The control set $A$ is a metric space.  The coefficients
$b,\sigma,\mu,\eta$ and the rewards $f,\Phi$ are continuous.  The surplus coefficients $\mu,\eta$ are locally Lipschitz in $r$, uniformly
on compact sets of $(t,x,a)$.  The controlled state equations are well posed
and nonexplosive in the chosen formulation.  The rewards $f,\Phi$ are uniformly
continuous in the state variables on compact sets.  The admissible control
families are nonempty and stable under conditioning and concatenation.  The weak
dynamic programming principle holds.
\end{assumption}

For a smooth function $\psi=\psi(t,x,r)$, define the spatial generator
with running reward by
\[
\begin{aligned}
 (\mathscr L^a\psi)(t,x,r)
 &:=f(t,x,r,a)+b(t,x,r,a)\cdot D_x\psi(t,x,r)
   +\mu(t,x,r,a)\partial_r\psi(t,x,r)\\
 &\quad+\frac12\Tr\left[
      \begin{pmatrix}\sigma(t,x,r,a)\\ \eta(t,x,r,a)\end{pmatrix}
      \begin{pmatrix}\sigma(t,x,r,a)\\ \eta(t,x,r,a)\end{pmatrix}^{\!\top}
      D^2_{(x,r)}\psi(t,x,r)\right].
\end{aligned}
\]
\begin{assumption}[State-constraint viscosity inequalities]
\label{ass:state-constraint-viscosity}
All local extrema below are taken relative to $\cD$.
Let $\psi$ be a smooth test function such that
\begin{equation}\label{eq:concave-surplus-tests}
 \psi(t,x,r)=\phi(t,x)+\alpha r-\beta r^2,
 \qquad \phi\in C^{1,2},\quad \alpha\in\RR,\quad \beta>0,
\end{equation}
and $V^*-\psi$ has a local maximum at $(t,x,r)\in\cD$.
Then, for every $\delta>0$, there exists $a_\delta\in A$ such that
\[
 \bigl(-\partial_t\psi-\mathscr L^{a_\delta}\psi\bigr)(t,x,r)
 \le\delta.
\]
For any smooth test function $\psi$ such that $V_*-\psi$ has a
local minimum at $(t,x,r)\in\cD$, we have
\[
 \bigl(-\partial_t\psi-\mathscr L^a\psi\bigr)(t,x,r)\ge0
\]
for every $a\in A$ when $r>0$, and for every $a\in A_0(t,x)$
when $r=0$.
\end{assumption}

Assumption~\ref{ass:state-constraint-viscosity} identifies the viscosity
properties of $V^*$ and $V_*$ on $\cD$ used to derive the corresponding
properties of their boundary limits on $\GammaD$. This asymmetry is
consistent with Bouchard, Elie and Imbert \cite[Theorem~3.1]{BEI2010}:
the subsolution inequality uses the full Hamiltonian up to the boundary,
whereas the boundary supersolution inequality uses a constrained
operator. Neither assertion presupposes convergence of $V$ to $\cV$.

For $(t,x)\in[0,T]\times\RR^d$, define the limits from the open half-space by
\begin{align*}
 \overline V(t,x)&:=
   \limsup_{\substack{(s,y,r)\to(t,x,0)\\ r>0}}V(s,y,r),
   \\
 \underline V(t,x)&:=
   \liminf_{\substack{(s,y,r)\to(t,x,0)\\ r>0}}V(s,y,r).
\end{align*}
Equivalently, $\overline V(t,x)=V^*(t,x,0)$ and
$\underline V(t,x)=V_*(t,x,0)$.

The following elementary lemma records the contact points used in both parts of
the boundary proof.  The points may lie on $r=0$, where Assumption
\ref{ass:state-constraint-viscosity} supplies the constrained viscosity
inequality.

\begin{lemma}[Boundary contact points]\label{lem:boundary-contact}
Let $t_0<T$ and $\phi\in C^{1,2}$.
\begin{enumerate}[label=(\roman*)]
\item If $\overline V-\phi$ has a strict local maximum equal to zero at
$(t_0,x_0)$, then, for every $\varepsilon>0$, there are points
$(t_n,x_n,r_n)\in\cD$, with $0\le r_n<1/n$, such that
$V^*(t,x,r)-\phi(t,x)-\varepsilon(2nr-n^2r^2)$
has a local maximum at $(t_n,x_n,r_n)$ and
\begin{equation}\label{eq:upper-contact-points}
       (t_n,x_n)\longrightarrow(t_0,x_0),\qquad nr_n\longrightarrow0.
\end{equation}
\item If $\underline V-\phi$ has a strict local minimum equal to zero at
$(t_0,x_0)$, then there are points $(t_n,x_n,r_n)\in\cD$, with
$0\le r_n<1/n$, such that $V_*(t,x,r)-\phi(t,x)+nr$
has a local minimum at $(t_n,x_n,r_n)$ and
\begin{equation}\label{eq:lower-contact-points}
       (t_n,x_n)\longrightarrow(t_0,x_0),\qquad nr_n\longrightarrow0.
\end{equation}
\end{enumerate}
\end{lemma}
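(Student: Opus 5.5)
Both parts follow the standard penalization argument: maximize (resp. minimize) the penalized function over a compact neighbourhood of $(t_0,x_0,0)$ in $\cD$, and then use the strictness of the extremum to force the optimizers back to $(t_0,x_0)$. We need local boundedness of $V^*$ near the boundary, which holds because $\overline V(t_0,x_0)=\phi(t_0,x_0)$ is finite and $V^*$ is upper semicontinuous. For part (i), fix $\varepsilon>0$ and a closed ball $B=\bar B_\rho(t_0,x_0)$ with $\rho$ small, $t<T$ on $B$, $\overline V-\phi<0$ on $B\setminus\{(t_0,x_0)\}$, and $V^*$ bounded above on $B\times[0,1]$.

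\textbf{Step 1 (part (i)).} Set $g_n(r)=2nr-n^2r^2$. It is increasing on $[0,1/n]$, with $g_n(0)=0$ and $g_n(1/n)=1$. We maximize
\[
\Theta_n(t,x,r)=V^*(t,x,r)-\phi(t,x)-\varepsilon g_n(r)
\]
over the compact set $K_n=B\times[0,1/n]$. A maximum $(t_n,x_n,r_n)$ exists by upper semicontinuity. Lower bound: taking points $(s,y,r)\to(t_0,x_0,0)$ with $r>0$ and $V\to\overline V(t_0,x_0)$, and using $g_n(r)\to0$, we get $\Theta_n(t_n,x_n,r_n)\ge \overline V(t_0,x_0)-\phi(t_0,x_0)-o(1)$. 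In fact the maximum value is $\ge 0$ for every $n$ once we use $V^*(t_0,x_0,0)=\overline V(t_0,x_0)$ directly. Upper bound: since $r_n\le1/n\to0$, upper semicontinuity gives, along any subsequence with $(t_n,x_n)\to(\bar t,\bar x)$,
\[
\limsup \Theta_n \le \overline V(\bar t,\bar x)-\phi(\bar t,\bar x)-\varepsilon\liminf g_n(r_n).
\]
Combining this with the lower bound $\ge0$ and strictness yields $(\bar t,\bar x)=(t_0,x_0)$ and $g_n(r_n)\to0$. Since $g_n(r)\ge nr$ on $[0,1/n]$, this gives $nr_n\to0$.

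\textbf{Step 2 (part (i), interiority).} We must check that the maximum over $K_n$ is a local maximum relative to $\cD$. For large $n$, $(t_n,x_n)$ lies in the interior of $B$. Also $r_n<1/n$, because $nr_n\to0$, so eventually $r_n\le 1/(2n)$. Hence the point is interior to $K_n$ relative to $\cD$, and the constraint $r\ge0$ is the only active one, which is allowed. The strict inequality $r_n<1/n$ holds for large $n$; we reindex if necessary, e.g.\ by passing to a subsequence and relabelling $n$, or by noting that the statement only needs these properties eventually.

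\textbf{Step 3 (part (ii)).} This is symmetric. We minimize $V_*-\phi+nr$ over $B\times[0,1/n]$, using lower semicontinuity and local boundedness from below. The value at $(t_0,x_0,0)$ is $0$, so the minimum value is $\le0$. At the minimizer,
\[
V_*(t_n,x_n,r_n)-\phi(t_n,x_n)+nr_n\le0 .
\]
Taking a liminf and using $\liminf V_*(t_n,x_n,r_n)\ge \underline V(\bar t,\bar x)$ (valid since $r_n\to0$) gives $\underline V(\bar t,\bar x)-\phi(\bar t,\bar x)+\limsup nr_n\le 0$. Strictness then forces $(\bar t,\bar x)=(t_0,x_0)$ and $nr_n\to0$. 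Interiority follows as in Step 2.

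\textbf{Main obstacle.} The only delicate point is the semicontinuity bookkeeping at $r=0$. We need $\limsup V^*(t_n,x_n,r_n)\le \overline V(\bar t,\bar x)$ when $r_n\to0$ (and the analogous inequality for $V_*$). This holds because $V^*$ is the envelope taken from $r>0$, so $V^*(\cdot,\cdot,0)=\overline V$ and $V^*$ is upper semicontinuous on the closed half-space. The second delicate point is ensuring $r_n<1/n$ strictly, so that the extremum is genuinely local and not an artifact of the cutoff at $r=1/n$.
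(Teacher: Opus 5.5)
Your proposal is correct and follows the paper's own proof. You penalize on $B\times[0,1/n]$, get the lower bound $0$ from $V^*(t_0,x_0,0)=\overline V(t_0,x_0)$, and let semicontinuity and strictness force $(t_n,x_n)\to(t_0,x_0)$ and $nr_n\to0$; interiority then holds for large $n$.

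Two small repairs are needed.

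\begin{enumerate}[label=(\roman*)]
\item In Step~1, the bound involving $\liminf g_n(r_n)$ only yields $\liminf g_n(r_n)=0$. Either rewrite $\Theta_n\ge0$ as $\varepsilon g_n(r_n)\le V^*(t_n,x_n,r_n)-\phi(t_n,x_n)$ to get $\limsup g_n(r_n)=0$, or extract a subsequence along which $nr_n$ converges, as the paper does. Your Step~3 already does it the right way.
\item The conclusion should be read as holding for all large $n$, which is the paper's reading. Relabelling a subsequence would decouple the point index from the penalty parameter $n$ in $2nr-n^2r^2$.
\end{enumerate}
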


\begin{proof}
We prove (i); the argument for (ii) is the same with maxima replaced by minima.
Choose a compact cylinder $Q$ around $(t_0,x_0)$ on which the maximum is strict
and set $h(s)=2s-s^2$.  Maximize $V^*-\phi-\varepsilon h(nr)$ on
$Q\times[0,1/n]$; the maximum is at least zero since
$V^*(t_0,x_0,0)=\overline V(t_0,x_0)$.  If $(t_n,x_n,nr_n)\to(t,x,s)$ along a
subsequence, upper semicontinuity gives
$0\le\overline V(t,x)-\phi(t,x)-\varepsilon h(s)$, so strictness forces
$(t,x)=(t_0,x_0)$ and then $s=0$.  Thus every maximizing sequence satisfies
\eqref{eq:upper-contact-points} and stays away from the faces of
$Q\times[0,1/n]$ for large $n$.

For (ii), minimize $V_*-\phi+nr$ on $Q\times[0,1/n]$; any limit $(t,x,s)$ of
minimizers satisfies $0\ge\underline V(t,x)-\phi(t,x)+s$, so the strict
minimum forces $(t,x,s)=(t_0,x_0,0)$.
\end{proof}

For a test jet $(q,M)\in\RR^d\times\mathbb S^d$, write
\begin{equation*}
 \mathcal B^a(t,x,r;q,M)
 :=b(t,x,r,a)\cdot q+
   \frac12\Tr\big[\sigma\sigma^\top(t,x,r,a)M\big]
   +f(t,x,r,a).
\end{equation*}

\begin{assumption}[Controls on the boundary]\label{ass:boundary-controls}
The correspondence $A_0$ in \eqref{eq:A0} is nonempty and lower
semicontinuous on $[0,T)\times\RR^d$: if $(t_n,x_n)\to(t,x)$ with $t<T$
and $a\in A_0(t,x)$, there are
$a_n\in A_0(t_n,x_n)$ such that $a_n\to a$.  The boundary control family
$\cA^0_{t,x}$ is nonempty, and its admissible controls generate well-posed
dynamics \eqref{eq:X-boundary}.
\end{assumption}

The graph of $A_0$ is closed by continuity of $\mu$ and $\eta$.  When $A$ is
compact, Assumption \ref{ass:boundary-controls} therefore makes $A_0$ a
continuous compact-valued correspondence: $d_H(A_0(t_n,x_n),A_0(t,x))\to0$
whenever $(t_n,x_n)\to(t,x)$ with $t<T$, where, for subsets $C,D$ of $A$ with metric
$d_A$ and $\dist(c,D):=\inf_{d\in D}d_A(c,d)$,
\[
 e(C,D):=\sup_{c\in C}\dist(c,D),\qquad
 d_H(C,D):=\max\{e(C,D),e(D,C)\},
\]
with $e(\varnothing,D)=0$.
The boundary Hamiltonian is
\begin{equation}\label{eq:boundary-Hamiltonian}
 H_0(t,x,q,M):=\sup_{a\in A_0(t,x)}
       \mathcal B^a(t,x,0;q,M).
\end{equation}
In the compact-control case it is continuous by Berge's maximum theorem.

The following assumption bounds the positive part of the drift of $R$
in terms of its current value and the magnitude of its diffusion
coefficient near the boundary.
\begin{assumption}[Positive surplus drift]\label{ass:drift-cost}
For every compact $K\subset[0,T)\times\RR^d$ there are $C=C(K)$ and $r_0>0$
such that, for $(t,x)\in K$, $0\le r\le r_0$, and $a\in A$,
\begin{equation}\label{eq:drift-cost}
 \mu(t,x,r,a)^+\le C\bigl(r+|\eta(t,x,r,a)|+|\eta(t,x,r,a)|^2\bigr).
\end{equation}
\end{assumption}

\begin{assumption}[Generator growth]\label{ass:generator-growth}
For every compact $K\subset[0,T)\times\RR^d$, $Q<\infty$, and $\kappa>0$
there are $C_\kappa=C(K,Q,\kappa)$ and $r_0>0$ such that, for $(t,x)\in K$,
$0\le r\le r_0$, $a\in A$, and $|q|+|M|\le Q$,
\begin{equation}\label{eq:generator-growth}
 \mathcal B^a(t,x,r;q,M)\le C_\kappa+\kappa|\eta(t,x,r,a)|^2.
\end{equation}
\end{assumption}

\begin{assumption}[Boundary generators dominate]\label{ass:generator-domination}
The boundary Hamiltonian $H_0$ is finite and continuous on
$[0,T)\times\RR^d\times\RR^d\times\mathbb S^d$.  Moreover, if
$(t_n,x_n,r_n)\to(t,x,0)$ with $t<T$, $(q_n,M_n)\to(q,M)$, and $a_n\in A$
satisfy $\mu(t_n,x_n,r_n,a_n)\to0$ and $\eta(t_n,x_n,r_n,a_n)\to0$,
then
\begin{equation}\label{eq:sequential-generator-bound}
 \limsup_{n\to\infty}\mathcal B^{a_n}(t_n,x_n,r_n;q_n,M_n)\le H_0(t,x,q,M).
\end{equation}
\end{assumption}

All three conditions are one-sided: only the positive part of the surplus
drift is controlled, and \eqref{eq:generator-growth} and
\eqref{eq:sequential-generator-bound} are upper bounds.  Estimates of the form
$\mu^+\le C(r+|\eta|)$ and $\mathcal B^a\le C(1+|\eta|)$ imply
\eqref{eq:drift-cost} and \eqref{eq:generator-growth}, since
$C|\eta|\le C^2/(4\kappa)+\kappa|\eta|^2$.

The state, the Brownian motion, and the controls may be multidimensional; $\mu$
is scalar and $|\eta|$ is the Euclidean norm of the vector of surplus
volatilities.  

\paragraph{The local lower bound}
Bouchard, Elie and Imbert \cite{BEI2010} use, in their Assumption 3.4, the
condition
\begin{equation}\label{eq:local-lower-bound}
 \liminf_{\substack{\lambda\to\infty\\(s,y)\to(t,x)}}
 \inf_{a\in A}
 \left\{-\lambda\mu(s,y,0,a)+\lambda^2|\eta(s,y,0,a)|^2\right\}>-\infty
\end{equation}
at points $(t,x)\in[0,T)\times\RR^d$.  It is local in $(t,x)$ and is expressed
entirely in terms of the controlled coefficients on $R=0$.  In the variables
$R=Y-w(t,X)$ it reads
\begin{equation}\label{eq:bei-local-bound}
 \liminf_{\substack{n\to\infty\\(s,y)\to(t,x)}}
 \inf_{a\in A}
 \left\{n\bigl(L_X^a w-b_Y^a\bigr)
              +n^2|\sigma_Y^a-Dw\,\sigma_X^a|^2\right\}>-\infty ,
\end{equation}
which is the lower-bound component of their Assumption 3.4.  Their assumption
also invokes the unbounded-control condition (3.11) and states the lower bound
at points where the selected control is interior.  The next lemma identifies
\eqref{eq:local-lower-bound} with the boundary case of \eqref{eq:drift-cost}.
The signed drift estimates of Alvarez, Ekren and Huang \cite{AEH2026},
verified in Lemma \ref{lem:AEH-verification} below, imply this condition.

\begin{lemma}[Equivalent forms of the drift condition]\label{lem:drift-equivalence}
Fix $(t,x)\in[0,T)\times\RR^d$.  Condition \eqref{eq:local-lower-bound} holds
at $(t,x)$ if and only if there are a neighborhood $N$ of $(t,x)$ and a
constant $C$ such that
\begin{equation}\label{eq:drift-cost-boundary}
 \mu(s,y,0,a)^+\le C\bigl(|\eta(s,y,0,a)|+|\eta(s,y,0,a)|^2\bigr),
 \qquad (s,y)\in N,\ a\in A.
\end{equation}
In particular, Assumption \ref{ass:drift-cost} implies
\eqref{eq:local-lower-bound} at every point, and \eqref{eq:local-lower-bound}
on $[0,T)\times\RR^d$ is equivalent to \eqref{eq:drift-cost} restricted to
$r=0$.
\end{lemma}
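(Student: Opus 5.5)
The plan is to prove both implications directly from the quadratic $-\lambda\mu+\lambda^2|\eta|^2$ in \eqref{eq:local-lower-bound}, using one-variable optimization in $\lambda$ for one direction and a choice of $\lambda$ adapted to $|\eta|$ for the other. Throughout we write $\mu=\mu(s,y,0,a)$ and $\eta=\eta(s,y,0,a)$.

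\emph{Sufficiency of \eqref{eq:drift-cost-boundary}.} Assume \eqref{eq:drift-cost-boundary} holds on $N$ with constant $C$. For $(s,y)\in N$, $a\in A$ and $\lambda\ge1$ we have
\[
 -\lambda\mu+\lambda^2|\eta|^2\ \ge\ -\lambda C|\eta|-\lambda C|\eta|^2+\lambda^2|\eta|^2 .
\]
For $\lambda\ge 2C$ the last two terms together are at least $\tfrac12\lambda^2|\eta|^2$. Minimizing $-\lambda C|\eta|+\tfrac12\lambda^2|\eta|^2$ over $|\eta|\ge0$ gives the lower bound $-C^2/2$, independently of $\lambda$, $a$ and $(s,y)$. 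Hence the $\liminf$ in \eqref{eq:local-lower-bound} is at least $-C^2/2>-\infty$.

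\emph{Necessity of \eqref{eq:drift-cost-boundary}.} Assume \eqref{eq:local-lower-bound}. By the definition of $\liminf$, there are $m\ge0$, $\lambda_0\ge1$ and a neighborhood $N$ of $(t,x)$ such that
\[
 -\lambda\mu+\lambda^2|\eta|^2\ge -m\qquad\text{for all }\lambda\ge\lambda_0,\ (s,y)\in N,\ a\in A .
\]
Dividing by $\lambda$ gives $\mu\le m/\lambda+\lambda|\eta|^2$. We treat two cases.
\begin{enumerate}[label=(\alph*)]
\item If $\eta=0$, letting $\lambda\to\infty$ gives $\mu\le0$, so $\mu^+=0$.
\item If $\eta\ne0$, take $\lambda=\max\{\lambda_0,1/|\eta|\}$. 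When $|\eta|\le1/\lambda_0$ we have $\lambda=1/|\eta|$, so $\mu\le m|\eta|+|\eta|=(m+1)|\eta|$. Otherwise $\lambda=\lambda_0$, and since $1<\lambda_0|\eta|$,
\[
 \mu\le m/\lambda_0+\lambda_0|\eta|^2\le m|\eta|+\lambda_0|\eta|^2 .
\]
\end{enumerate}
In every case $\mu^+\le C(|\eta|+|\eta|^2)$ with $C=\max\{m+1,\lambda_0\}$, which is \eqref{eq:drift-cost-boundary}.

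\emph{The final claims.} Take $r=0$ in \eqref{eq:drift-cost}, using a compact neighborhood $K$ of $(t,x)$ in $[0,T)\times\RR^d$. This gives \eqref{eq:drift-cost-boundary} on the interior of $K$, so \eqref{eq:local-lower-bound} holds at every point. Conversely, suppose \eqref{eq:local-lower-bound} holds everywhere. The local constants from the necessity direction then combine over a finite subcover of any compact $K$, taking the maximum of the constants. This yields \eqref{eq:drift-cost} at $r=0$ with a uniform $C(K)$.

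The only delicate point is the choice of $\lambda$ in the necessity direction. One must handle small and large $|\eta|$ separately, and $\lambda$ must respect the threshold $\lambda_0$ coming from the $\liminf$.
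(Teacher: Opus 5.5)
Your proof is correct and takes the same approach as the paper: the converse direction uses the same bound $-C^2/2$ for $\lambda\ge 2C$, and the forward direction uses the same case split on $|\eta|$ with a $\lambda$ adapted to $|\eta|$. The only difference is that you take $\lambda=\max\{\lambda_0,1/|\eta|\}$ where the paper takes $\lambda=\sqrt{C}/|\eta|$, which changes only the constant; the final claims via $r=0$ and a finite subcover also match the paper.
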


\begin{proof}
Suppose \eqref{eq:local-lower-bound} holds.  There are $\lambda_0\ge1$, a
neighborhood $N$, and $C\ge1$ such that
$-\lambda\mu+\lambda^2|\eta|^2\ge-C$ for all $\lambda\ge\lambda_0$,
$(s,y)\in N$, and $a\in A$, the coefficients being evaluated at $(s,y,0,a)$.
Thus $\mu\le C/\lambda+\lambda|\eta|^2$ for all $\lambda\ge\lambda_0$.  If
$\eta=0$, letting $\lambda\to\infty$ gives $\mu\le0$.  If
$0<|\eta|\le\sqrt C/\lambda_0$, the choice $\lambda=\sqrt C/|\eta|\ge\lambda_0$
gives $\mu\le2\sqrt C\,|\eta|$.  If $|\eta|>\sqrt C/\lambda_0$, the choice
$\lambda=\lambda_0$ gives
$\mu\le C/\lambda_0+\lambda_0|\eta|^2\le\sqrt C\,|\eta|+\lambda_0|\eta|^2$.
This proves \eqref{eq:drift-cost-boundary}.

Conversely, suppose \eqref{eq:drift-cost-boundary} holds.  For
$\lambda\ge2C$,
\[
 -\lambda\mu+\lambda^2|\eta|^2
 \ge-C\lambda|\eta|-C\lambda|\eta|^2+\lambda^2|\eta|^2
 \ge\frac{\lambda^2}{2}|\eta|^2-C\lambda|\eta|
 \ge-\frac{C^2}{2},
\]
by minimizing the quadratic in $\lambda|\eta|$.  The last assertion of the
lemma follows by covering a compact set of $(t,x)$ by finitely many
neighborhoods $N$.
\end{proof}

By the local Lipschitz regularity in $r$ in
Assumption~\ref{ass:basic}, \eqref{eq:local-lower-bound} at every
$(t,x)$ is equivalent to Assumption~\ref{ass:drift-cost} when $A$
is compact. The same equivalence holds under
Assumption~\ref{ass:coercive-controls}; see
Proposition~\ref{prop:compact-coercive}.

\paragraph{Compact and coercive controls}
For compact or coercive control sets, Assumptions
\ref{ass:generator-growth} and \ref{ass:generator-domination} follow from
Assumption \ref{ass:boundary-controls}, and Assumption \ref{ass:drift-cost}
follows from \eqref{eq:local-lower-bound}.  The compactness needed in the
proof is thereby transferred from the controls to the generators they select.
The following is the standard coercivity condition used for unbounded controls
in Bouchard, Elie and Imbert \cite{BEI2010}.

\begin{assumption}[Coercivity]\label{ass:coercive-controls}
Let $A\subset\RR^m$ be closed.  For
every compact $K\subset[0,T)\times\RR^d$, every $Q<\infty$, and some $r_0>0$,
as $\rho\to\infty$,
\begin{align}
 &\inf_{\substack{(t,x)\in K,\ 0\le r\le r_0\\a\in A,\ |a|\ge \rho}}
       |\eta(t,x,r,a)|\longrightarrow\infty,
       \label{eq:eta-coercive}\\
 &\sup_{\substack{(t,x)\in K,\ 0\le r\le r_0,\ |q|+|M|\le Q\\
                   a\in A,\ |a|\ge \rho}}
 \frac{|\mu(t,x,r,a)|+|\mathcal B^a(t,x,r;q,M)|}
      {1+|\eta(t,x,r,a)|^2}
 \longrightarrow0.
 \label{eq:coercive-ratio}
\end{align}
\end{assumption}

\begin{proposition}[Compact or coercive controls]\label{prop:compact-coercive}
Suppose Assumptions \ref{ass:basic} and \ref{ass:boundary-controls} hold,
\eqref{eq:local-lower-bound} holds at every $(t,x)\in[0,T)\times\RR^d$, and
either $A$ is compact with coefficients and rewards locally bounded uniformly
in $a$, or Assumption \ref{ass:coercive-controls} holds.  Then Assumptions
\ref{ass:drift-cost}, \ref{ass:generator-growth}, and
\ref{ass:generator-domination} hold.
\end{proposition}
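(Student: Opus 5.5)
We treat the two cases, compact and coercive, in parallel and verify the three assumptions in order: drift, then growth, then domination. Throughout, fix a compact $K\subset[0,T)\times\RR^d$ and a bound $Q$ on test jets.

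\emph{Assumption \ref{ass:drift-cost}.} By Lemma \ref{lem:drift-equivalence} and a finite covering of $K$, \eqref{eq:local-lower-bound} gives the boundary estimate $\mu(t,x,0,a)^+\le C_0(|\eta(t,x,0,a)|+|\eta(t,x,0,a)|^2)$ on $K$ for all $a\in A$. We pass to $0\le r\le r_0$ using the local Lipschitz property in $r$.

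In the compact case, Assumption \ref{ass:basic} gives a Lipschitz constant $L$, uniform over the compact set $K\times A$. Hence $\mu(t,x,r,a)\le\mu(t,x,0,a)+Lr$ and $|\eta(t,x,0,a)|\le|\eta(t,x,r,a)|+Lr$. Substituting, and using $(|\eta|+Lr)^2\le2|\eta|^2+2L^2r^2$ with $r\le r_0$, yields \eqref{eq:drift-cost}.

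In the coercive case we split at a radius $\rho$. For $|a|\le\rho$ the set $K\times\{|a|\le\rho\}$ is compact, so the same Lipschitz argument applies. For $|a|\ge\rho$, \eqref{eq:coercive-ratio} gives $|\mu|\le 1+|\eta|^2$ once $\rho$ is large, uniformly in $r\le r_0$, and \eqref{eq:eta-coercive} gives $|\eta|\ge1$. Together these give $\mu^+\le2|\eta|^2$.

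\emph{Assumption \ref{ass:generator-growth}.} In the compact case, local boundedness of the coefficients and rewards, uniformly in $a$, makes $\mathcal B^a$ bounded on $K\times[0,r_0]\times A\times\{|q|+|M|\le Q\}$, so we may take $C_\kappa$ to be this bound. In the coercive case, given $\kappa$, choose $\rho$ so that the ratio in \eqref{eq:coercive-ratio} is at most $\kappa$. Then $\mathcal B^a\le\kappa+\kappa|\eta|^2$ for $|a|\ge\rho$. On $|a|\le\rho$, continuity on a compact set bounds $\mathcal B^a$.

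\emph{Assumption \ref{ass:generator-domination}.} This is the main step. Finiteness of $H_0$ follows from the bounds just proved, since $\eta=0$ on $A_0$. Continuity of $H_0$ has two halves.

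\begin{enumerate}[label=(\roman*)]
\item Lower semicontinuity: for $a\in A_0(t,x)$, Assumption \ref{ass:boundary-controls} supplies $a_n\in A_0(t_n,x_n)$ with $a_n\to a$. Continuity of $\mathcal B$ then gives $\liminf_n H_0(t_n,x_n,q_n,M_n)\ge\mathcal B^a(t,x,0;q,M)$, and we take the supremum over $a\in A_0(t,x)$.
\item Upper semicontinuity: this is the special case $r_n=0$, $a_n\in A_0(t_n,x_n)$ of the sequential bound \eqref{eq:sequential-generator-bound}. Here $a_n$ should be chosen as near-maximizers of $H_0(t_n,x_n,q_n,M_n)$.
\end{enumerate}

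So the work reduces to \eqref{eq:sequential-generator-bound}. Take $a_n$ with $\mu_n\to0$ and $\eta_n\to0$, and pass to a subsequence realizing the $\limsup$.

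In the compact case, extract a further subsequence with $a_n\to a$. Continuity gives $\mu(t,x,0,a)=0$ and $\eta(t,x,0,a)=0$, so $a\in A_0(t,x)$, and $\mathcal B^{a_n}\to\mathcal B^a\le H_0$.

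In the coercive case, first use \eqref{eq:eta-coercive}: since $\eta_n\to0$, the sequence $a_n$ must stay bounded. Then $A$ closed allows extraction of a convergent subsequence, and the argument finishes as in the compact case.

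The main obstacle is the uniformity in $r\le r_0$ in the coercive case. Condition \eqref{eq:eta-coercive} must hold uniformly in $r$, which it does by assumption; that is exactly what keeps $a_n$ bounded in the last step.
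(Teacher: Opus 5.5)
Your proof is correct and follows the paper's argument. It transfers the boundary drift estimate from Lemma~\ref{lem:drift-equivalence} to small $r$ by the Lipschitz bound, uses coercivity (or compactness) to handle large controls, and extracts a convergent subsequence of controls to obtain \eqref{eq:sequential-generator-bound}. The differences are only organizational. The paper splits the drift estimate at $|\eta|\le1$ rather than at $|a|\le\rho$, using that the controls with $|\eta|\le1$ near $K$ form a set with compact closure. It then obtains continuity of $H_0$ from Berge's maximum theorem, since $A_0$ takes values in that fixed compact set, where you argue lower and upper semicontinuity directly via near-maximizers.
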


\begin{proof}
Fix a compact $K$ and $r_0>0$ as in Assumption \ref{ass:coercive-controls}
($r_0$ arbitrary in the compact case), and let $A_1$ be the set of controls
with $|\eta(t,x,r,a)|\le1$ for some $(t,x)\in K$, $0\le r\le r_0$; its closure
is compact, by \eqref{eq:eta-coercive} in the coercive case.  By Assumption \ref{ass:basic}, $\mu$ and $\eta$ are
Lipschitz in $r\in[0,r_0]$ with a constant $L$ uniform over
$K\times\overline{A_1}$; decrease $r_0$ so that $Lr_0\le1$.

\emph{Assumption \ref{ass:drift-cost}.}  By Lemma \ref{lem:drift-equivalence}
and compactness of $K$, \eqref{eq:drift-cost-boundary} holds on $K$ with a
constant $C$.  Let $(t,x)\in K$, $0\le r\le r_0$, $a\in A$, and put
$u=|\eta(t,x,r,a)|$.  If $u\le1$, then $a\in A_1$, and the Lipschitz bounds
$\mu(t,x,r,a)\le\mu(t,x,0,a)+Lr$ and $|\eta(t,x,0,a)|\le u+Lr$ give
\[
 \mu(t,x,r,a)^+\le C\bigl(u+Lr+2u^2+2L^2r^2\bigr)+Lr\le C'(r+u+u^2).
\]
If $u>1$, then $\mu(t,x,r,a)^+\le C''u^2$, where $C''$ bounds $|\mu|$ on
$K\times[0,r_0]\times A$ in the compact case; under \eqref{eq:coercive-ratio},
$|\mu|\le1+|\eta|^2\le2u^2$ for $|a|\ge\rho$, and $|\mu|$ is bounded on the
compact set $K\times[0,r_0]\times\{a\in A:|a|\le\rho\}$.

\emph{Assumption \ref{ass:generator-growth}.}  In the compact case
$\mathcal B^a$ is bounded for bounded jets; under \eqref{eq:coercive-ratio},
$|\mathcal B^a|\le\kappa(1+|\eta|^2)$ for $|a|\ge\rho_\kappa$ and
$\mathcal B^a$ is bounded for $|a|\le\rho_\kappa$.

\emph{Assumption \ref{ass:generator-domination}.}  Since
$A_0(t,x)\subset\overline{A_1}$ for $(t,x)\in K$, Assumption
\ref{ass:boundary-controls} and the closed graph make $A_0$ a continuous
correspondence with values in a fixed compact set, so $H_0$ is finite and
continuous on $K$ by Berge's maximum theorem.  Let $(t_n,x_n,r_n)\to(t,x,0)$,
$(q_n,M_n)\to(q,M)$, and $a_n\in A$ with
$\mu(t_n,x_n,r_n,a_n)\to0$ and $\eta(t_n,x_n,r_n,a_n)\to0$.  For large $n$,
$a_n\in A_1$.  Along any subsequence, extract $a_n\to a_0\in\overline{A_1}$.
Continuity gives $\mu(t,x,0,a_0)=\eta(t,x,0,a_0)=0$, so $a_0\in A_0(t,x)$,
and
$\mathcal B^{a_n}(t_n,x_n,r_n;q_n,M_n)\to\mathcal B^{a_0}(t,x,0;q,M)
\le H_0(t,x,q,M)$.  This proves \eqref{eq:sequential-generator-bound}.
\end{proof}

\paragraph{Noncompact controls}
When $A$ is neither compact nor coercive, the controls selected by the penalized
HJB need not have convergent subsequences, and
\eqref{eq:sequential-generator-bound} is verified by comparing generators
directly.  Write $C^a(t,x,r):=(b,\sigma\sigma^\top,f)(t,x,r,a)$
and, using the product Euclidean norm, define
\begin{align*}
 \mathfrak C_\delta(t,x,r)
   &:=\overline{\{C^a(t,x,r):a\in A,
             |\mu(t,x,r,a)|+|\eta(t,x,r,a)|\le\delta\}},\\
 \mathfrak C_0(t,x)
   &:=\overline{\{C^a(t,x,0):a\in A_0(t,x)\}}.
\end{align*}
For a set $E\subset\RR^d\times\mathbb S^d\times\RR$, define its support
function in the direction of a boundary jet by
\[
 h_E(q,M):=\sup_{(\beta,c,g)\in E}
 \left\{\beta\cdot q+\frac12\Tr(cM)+g\right\},
\]
with the supremum over the empty set equal to $-\infty$.  Then
$h_{\mathfrak C_0(t,x)}=H_0(t,x,\cdot)$.

\begin{lemma}[Coefficient-set test]
\label{prop:coefficient-set-test}
Suppose $H_0$ is finite and continuous and that, for every compact
$K\subset[0,T)\times\RR^d$ and every $Q<\infty$, there is a modulus
$\omega_{K,Q}(\epsilon)\downarrow0$ such that, whenever $(t,x)\in K$,
$0\le r\le\epsilon$, $0\le\delta\le\epsilon$, and $|q|+|M|\le Q$,
\begin{equation}\label{eq:support-function-bound}
 h_{\mathfrak C_\delta(t,x,r)}(q,M)
 \le H_0(t,x,q,M)+\omega_{K,Q}(\epsilon).
\end{equation}
Then Assumption \ref{ass:generator-domination} holds.  A sufficient condition
for \eqref{eq:support-function-bound} is
\begin{equation}\label{eq:coefficient-set-excess}
 \lim_{\epsilon\downarrow0}
 \sup_{\substack{(t,x)\in K\\0\le r\le\epsilon,\ 0\le\delta\le\epsilon}}
 e\bigl(\mathfrak C_\delta(t,x,r),\mathfrak C_0(t,x)\bigr)=0.
\end{equation}
\end{lemma}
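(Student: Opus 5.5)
The plan has two parts: first show that the support-function bound \eqref{eq:support-function-bound} gives \eqref{eq:sequential-generator-bound}, and then show that \eqref{eq:coefficient-set-excess} gives \eqref{eq:support-function-bound}. Finiteness and continuity of $H_0$ are assumed in the lemma, so only the sequential bound of Assumption \ref{ass:generator-domination} needs proof.

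\emph{Step 1.} Let $(t_n,x_n,r_n)\to(t,x,0)$ with $t<T$, $(q_n,M_n)\to(q,M)$, and $a_n\in A$ with $\mu(t_n,x_n,r_n,a_n)\to0$ and $\eta(t_n,x_n,r_n,a_n)\to0$. Fix a compact neighbourhood $K$ of $(t,x)$ in $[0,T)\times\RR^d$ and a bound $Q$ on $|q_n|+|M_n|$. Set
\[
\delta_n:=|\mu|+|\eta| \text{ evaluated at } (t_n,x_n,r_n,a_n), \qquad \epsilon_n:=\max\{r_n,\delta_n\}\to0 .
\]
By definition $C^{a_n}(t_n,x_n,r_n)\in\mathfrak C_{\delta_n}(t_n,x_n,r_n)$, so
\[
\mathcal B^{a_n}(t_n,x_n,r_n;q_n,M_n)\le h_{\mathfrak C_{\delta_n}(t_n,x_n,r_n)}(q_n,M_n)\le H_0(t_n,x_n,q_n,M_n)+\omega_{K,Q}(\epsilon_n).
\]
Taking $\limsup$ and using continuity of $H_0$ gives \eqref{eq:sequential-generator-bound}. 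This step is routine.

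\emph{Step 2.} Assume \eqref{eq:coefficient-set-excess} and set
\[
e_\epsilon:=\sup e\bigl(\mathfrak C_\delta(t,x,r),\mathfrak C_0(t,x)\bigr),
\]
the supremum being over the same ranges of $(t,x,r,\delta)$ as in \eqref{eq:coefficient-set-excess}. Take a point $(\beta,c,g)\in\mathfrak C_\delta(t,x,r)$. If $\mathfrak C_\delta$ is empty, the support function is $-\infty$ and there is nothing to prove. Otherwise, for any $\gamma>0$ there is a point $(\beta_0,c_0,g_0)\in\mathfrak C_0(t,x)$ within distance $e_\epsilon+\gamma$ of $(\beta,c,g)$. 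Note that $\mathfrak C_0(t,x)$ is nonempty because $A_0$ is nonempty, and if $e_\epsilon<\infty$ the nonemptiness of $\mathfrak C_0$ is forced anyway.

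The map $(\beta,c,g)\mapsto\beta\cdot q+\tfrac12\Tr(cM)+g$ is affine, with Lipschitz constant at most $L_Q:=1+Q+\tfrac12 Q$ in the product Euclidean norm (up to a dimensional constant for the trace pairing). Hence
\[
\beta\cdot q+\tfrac12\Tr(cM)+g\le h_{\mathfrak C_0(t,x)}(q,M)+L_Q(e_\epsilon+\gamma)=H_0(t,x,q,M)+L_Q(e_\epsilon+\gamma).
\]
Let $\gamma\downarrow0$ and take the supremum over $\mathfrak C_\delta$. Then $\omega_{K,Q}(\epsilon):=L_Q\sup_{\epsilon'\le\epsilon}e_{\epsilon'}$ works; it is nonincreasing as $\epsilon\downarrow0$ and tends to $0$. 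It remains to check $h_{\mathfrak C_0(t,x)}=H_0(t,x,\cdot)$, which was noted before the lemma: the support function of a closure equals that of the set, by continuity of the affine functional.

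\emph{Main obstacle.} The argument is straightforward, so the only care needed is bookkeeping. The modulus must be finite for small $\epsilon$; this follows because $e_\epsilon\to0$, so $e_\epsilon$ is finite for small $\epsilon$, and $\omega$ may be set to $+\infty$ (or any large value) for larger $\epsilon$. Empty sets also need handling, using the convention $e(\varnothing,D)=0$. Finally, $K$ must be chosen inside $[0,T)\times\RR^d$, so that Step 1 applies along the sequence for large $n$.
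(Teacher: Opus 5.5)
Your proposal is correct and follows the paper's proof essentially step for step. Step 1 places $C^{a_n}(t_n,x_n,r_n)$ in $\mathfrak C_{\delta_n}(t_n,x_n,r_n)$ with $\epsilon_n=\max\{r_n,\delta_n\}$ and then uses continuity of $H_0$. Step 2 bounds the affine functional by its Lipschitz constant times $\dist(c,\mathfrak C_0(t,x))$ plus a slack, exactly as the paper does. Your extra bookkeeping is a harmless refinement of details the paper leaves implicit: empty sets, nonemptiness of $\mathfrak C_0$, and allowing the modulus to be infinite for large $\epsilon$.
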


\begin{proof}
Consider a sequence as in Assumption \ref{ass:generator-domination} and put
$\delta_n=|\mu(t_n,x_n,r_n,a_n)|+|\eta(t_n,x_n,r_n,a_n)|$ and
$\epsilon_n=\max\{r_n,\delta_n\}\to0$.  After restricting to a compact
neighborhood of $(t,x)$, \eqref{eq:support-function-bound} gives
\[
 \mathcal B^{a_n}(t_n,x_n,r_n;q_n,M_n)
 \le H_0(t_n,x_n,q_n,M_n)+\omega_{K,Q}(\epsilon_n),
\]
and continuity of $H_0$ proves \eqref{eq:sequential-generator-bound}.

It remains to verify the last assertion.  The linear functional defining
$h_E(q,M)$ has norm bounded by a constant $L_Q$ when $|q|+|M|\le Q$.
For $c\in\mathfrak C_\delta(t,x,r)$ and $\zeta>0$, choose
$c_0\in\mathfrak C_0(t,x)$ such that
$|c-c_0|\le\dist(c,\mathfrak C_0(t,x))+\zeta$.  The defining linear
functional, denoted by $\Lambda_{q,M}$, then satisfies
\[
 \Lambda_{q,M}(c)
 \le H_0(t,x,q,M)
     +L_Q\bigl(\dist(c,\mathfrak C_0(t,x))+\zeta\bigr).
\]
Take the supremum over $c$ and then let $\zeta\downarrow0$.
Condition \eqref{eq:coefficient-set-excess} now gives
\eqref{eq:support-function-bound}.
\end{proof}

For the boundary equation to determine the limit on the entire time interval,
the terminal value must also be attained when the terminal point is approached
from the interior.

\begin{assumption}[Terminal limit]\label{ass:terminal-limit}
For every $x\in\RR^d$,
\begin{equation}\label{eq:terminal-envelope-condition}
       \overline V(T,x)\le \Phi(x,0)\le \underline V(T,x).
\end{equation}
\end{assumption}

Since always $\underline V\le\overline V$, condition
\eqref{eq:terminal-envelope-condition} says precisely that the interior value
converges to $\Phi(x,0)$ at $(T,x,0)$.  Lemma \ref{lem:terminal-state-limit}
in Appendix \ref{app:terminal-limit} proves that it holds if $f$ and $\Phi$
are bounded, $\Phi$ is uniformly continuous, and,
locally uniformly in $x$,
\begin{equation}\label{eq:terminal-state-estimate}
 \lim_{\delta\downarrow0}
 \sup_{\substack{T-\delta\le t<T,\ |y-x|\le\delta\\0<r\le\delta}}
 \sup_{a\in\cA_{t,y,r}}
 \EE\left[|X_T^{t,y,r,a}-x|+R_T^{t,y,r,a}\right]=0.
\end{equation}
Bounded drift and diffusion coefficients give \eqref{eq:terminal-state-estimate} by the
standard short-time estimate.

We now state our main result.
\begin{theorem}[Continuity of the value at the boundary]
\label{thm:boundary-continuity}
Suppose Assumptions \ref{ass:basic}, \ref{ass:state-constraint-viscosity},
\ref{ass:boundary-controls}, \ref{ass:drift-cost}, \ref{ass:generator-growth},
and \ref{ass:generator-domination} hold, and $V$ is locally bounded up to
$r=0$ from the open half-space.
Then $\overline V$ is a viscosity subsolution
and $\underline V$ is a viscosity supersolution on
$[0,T)\times\RR^d$ of
\begin{equation}\label{eq:boundary-HJB}
 -\partial_t v-H_0(t,x,Dv,D^2v)=0.
\end{equation}
Suppose in addition that Assumption \ref{ass:terminal-limit} holds.  Then the
subsolution and supersolution satisfy the respective viscosity terminal
inequalities for $v(T,x)=\Phi(x,0)$.  If this terminal-value problem satisfies
comparison in a growth class containing $\overline V$ and $\underline V$, with
solution $v_0$, then
\begin{equation}\label{eq:boundary-envelope-equality}
 \overline V(t,x)=\underline V(t,x)=v_0(t,x).
\end{equation}
In particular, for every compact $K\subset[0,T)\times\RR^d$,
\begin{equation}\label{eq:local-uniform-boundary}
 \lim_{r\downarrow0}\sup_{(t,x)\in K}|V(t,x,r)-v_0(t,x)|=0.
\end{equation}
The same conclusions hold at the face $r=0$ of a strip
$0\le r\le\ell(t,x)$, provided $\ell$ is continuous and strictly positive
for $t<T$, and all envelopes and hypotheses are understood relative to the strip.  If the boundary control value $\cV$ in \eqref{eq:boundary-value} is a
viscosity solution of the same terminal-value problem in the comparison class,
then $v_0=\cV$.
The result also holds with $\RR^d$ replaced by an open state domain $O$, locally
uniformly on compact subsets of $[0,T)\times O$, provided comparison for the
boundary equation is formulated with the relevant conditions on $\partial O$.
\end{theorem}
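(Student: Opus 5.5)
The plan has two parts. First, I derive the sub- and supersolution properties of $\overline V$ and $\underline V$ separately by testing $V^*$ and $V_*$ against the special test functions of Lemma~\ref{lem:boundary-contact}. Then comparison closes the argument. By the standard reduction (adding $|t-t_0|^2+|x-x_0|^4$ to $\phi$), it suffices to consider strict local extrema equal to zero at points $(t_0,x_0)$ with $t_0<T$.

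\emph{Subsolution.} Let $\overline V-\phi$ have a strict zero maximum at $(t_0,x_0)$, fix $\varepsilon>0$, and take the points $(t_n,x_n,r_n)$ of Lemma~\ref{lem:boundary-contact}(i). The test function
\[
\psi=\phi+\varepsilon(2nr-n^2r^2)
\]
has the form \eqref{eq:concave-surplus-tests} with $\alpha=2\varepsilon n$ and $\beta=\varepsilon n^2$. Since $\psi$ is separable, $D_x\partial_r\psi=0$. Assumption~\ref{ass:state-constraint-viscosity} with $\delta=1/n$ gives $a_n\in A$ with
\[
-\partial_t\phi-\mathcal B^{a_n}(t_n,x_n,r_n;D\phi,D^2\phi)-2\varepsilon n(1-nr_n)\mu_n+\varepsilon n^2|\eta_n|^2\le 1/n .
\]
I split $\mu_n=\mu_n^+-\mu_n^-$ and bound $\mu_n^+$ by Assumption~\ref{ass:drift-cost}. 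Then:
\begin{itemize}
\item $2\varepsilon nC r_n\to0$, because $nr_n\to0$;
\item $2\varepsilon nC|\eta_n|\le\tfrac14\varepsilon n^2|\eta_n|^2+4\varepsilon C^2$;
\item $2\varepsilon nC|\eta_n|^2\le\tfrac14\varepsilon n^2|\eta_n|^2$ for large $n$.
\end{itemize}
This yields
\[
-\partial_t\phi-\mathcal B^{a_n}+\tfrac12\varepsilon n^2|\eta_n|^2+\varepsilon n\mu_n^-\le 4\varepsilon C^2+o(1).
\]
Next I apply Assumption~\ref{ass:generator-growth} with $\kappa=1$, using $\mathcal B^{a_n}\le C_1+|\eta_n|^2$. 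Since the jets stay bounded, this forces $n^2|\eta_n|^2$ and $n\mu_n^-$ to be bounded. Hence $\eta_n\to0$ and $\mu_n^-\to0$, and $\mu_n^+\to0$ by \eqref{eq:drift-cost}. Assumption~\ref{ass:generator-domination} then applies along the sequence and gives
\[
-\partial_t\phi(t_0,x_0)-H_0(t_0,x_0,D\phi,D^2\phi)\le 4\varepsilon C^2 .
\]
Here $C$ depends only on a compact neighborhood, not on $\varepsilon$, so letting $\varepsilon\downarrow0$ gives the subsolution inequality. The main obstacle is this bookkeeping: the same quadratic penalty $\varepsilon n^2|\eta|^2$ must absorb the positive drift and also force $\eta_n,\mu_n\to0$. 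It is exactly where the concave-in-$r$ test class, $nr_n\to0$, and the one-sided bounds are all used.

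\emph{Supersolution.} Let $\underline V-\phi$ have a strict zero minimum, and take the points of Lemma~\ref{lem:boundary-contact}(ii) with $\psi=\phi-nr$. Fix $a\in A_0(t_0,x_0)$. By Assumption~\ref{ass:boundary-controls}, choose $a_n\in A_0(t_n,x_n)$ with $a_n\to a$.
\begin{itemize}
\item If $r_n=0$, the boundary inequality applies to $a_n$, and the $r$-terms vanish because $\mu=\eta=0$ there.
\item If $r_n>0$, the interior inequality applies to $a_n$. Since $\psi_{rr}=0$ and $D_x\psi_r=0$, the only extra term is $-n\mu(t_n,x_n,r_n,a_n)$. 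As $\{a_n\}\cup\{a\}$ is compact, the local Lipschitz property in $r$ gives $n|\mu|\le Lnr_n\to0$.
\end{itemize}
In both cases continuity of $\mathcal B$ gives $-\partial_t\phi-\mathcal B^{a}(t_0,x_0,0;D\phi,D^2\phi)\ge0$. Taking the supremum over $a\in A_0(t_0,x_0)$ gives the supersolution inequality for $H_0$.

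\emph{Conclusion.} Assumption~\ref{ass:terminal-limit} states the terminal inequalities pointwise, hence also in the viscosity sense. Comparison gives $\overline V\le v_0\le\underline V$. Together with $\underline V\le\overline V$ this gives \eqref{eq:boundary-envelope-equality}.

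For the local uniform limit, I argue by contradiction. Suppose there are $(s_k,y_k)\in K$ and $r_k\downarrow0$ with $|V(s_k,y_k,r_k)-v_0(s_k,y_k)|\ge\epsilon$. Extract a subsequence with $(s_k,y_k)\to(t,x)$. Then $\overline V(t,x)=\underline V(t,x)=v_0(t,x)$ and the continuity of $v_0$ give a contradiction.

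The strip and open-domain versions use the same local arguments, since every contact point lies near $r=0$ and in the interior of $O$. Finally, if $\cV$ solves the problem in the comparison class, uniqueness gives $v_0=\cV$.
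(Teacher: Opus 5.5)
Your proposal is correct and follows the paper's proof essentially step by step. The matching steps are these:
\begin{itemize}
\item the same concave test $\phi+\varepsilon(2nr-n^2r^2)$ with $\delta=1/n$;
\item absorption of $\mu_n^+$ via \eqref{eq:drift-cost} into the penalty $\varepsilon n^2|\eta_n|^2$, forcing $\eta_n,\mu_n\to0$ before invoking Assumption~\ref{ass:generator-domination};
\item the linear test $\phi-nr$ with $a_n\in A_0(t_n,x_n)$ from lower semicontinuity and the bound $n|\mu|\le Lnr_n$ for the supersolution;
\item comparison followed by a compactness argument at the end.
\end{itemize}
The differences are cosmetic. You take $\kappa=1$ in Assumption~\ref{ass:generator-growth} and bound $n\mu_n^-$ directly. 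The paper takes $\kappa=\varepsilon/8$ and uses an upper bound on the penalty term.
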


\begin{proof}
\emph{Subsolution property.}
Let $\phi\in C^{1,2}$ and suppose that $\overline V-\phi$ has a strict local
maximum equal to zero at $(t_0,x_0)$, with $t_0<T$.  Fix $\varepsilon>0$ and
apply Lemma \ref{lem:boundary-contact}(i) to
$\phi_n(t,x,r)=\phi(t,x)+\varepsilon(2nr-n^2r^2)$,
which gives contact points satisfying \eqref{eq:upper-contact-points}.  Write
$q_n=D\phi(t_n,x_n)$ and $M_n=D^2\phi(t_n,x_n)$.  The state-constraint
subsolution inequality with $\delta=n^{-1}$ provides $a_n\in A$ such that,
with $\mu_n=\mu(t_n,x_n,r_n,a_n)$ and $\eta_n=\eta(t_n,x_n,r_n,a_n)$,
\begin{equation}\label{eq:penalized-subsolution}
 -\partial_t\phi(t_n,x_n)-\mathcal B^{a_n}(t_n,x_n,r_n;q_n,M_n)
 +\varepsilon P_n\le n^{-1},
\end{equation}
where $P_n:=-2n(1-nr_n)\mu_n+n^2|\eta_n|^2$.  Fix a compact neighborhood $K$ of $(t_0,x_0)$ containing the contact points
and $Q$ bounding $|q_n|+|M_n|$, and let $C$ be the constant of Assumption
\ref{ass:drift-cost} on $K$.  Since $0<1-nr_n\le1$, \eqref{eq:drift-cost}
gives, for $n\ge4C$,
\begin{equation}\label{eq:Pn-lower}
 P_n\ge n^2|\eta_n|^2-2Cn\bigl(r_n+|\eta_n|+|\eta_n|^2\bigr)
 \ge\frac{n^2}{4}|\eta_n|^2-4C^2-2Cnr_n ,
\end{equation}
by minimizing $\frac{n^2}{4}u^2-2Cnu$ over $u\ge0$.  As $nr_n\to0$, there is
$C_1=4C^2+1$, independent of $\varepsilon$, with
$P_n\ge\frac{n^2}{4}|\eta_n|^2-C_1$ for all large $n$.

Insert \eqref{eq:Pn-lower} and \eqref{eq:generator-growth} with
$\kappa=\varepsilon/8$ into \eqref{eq:penalized-subsolution}:
\[
 \varepsilon\Bigl(\frac{n^2}{4}|\eta_n|^2-C_1\Bigr)
 \le\sup_K|\partial_t\phi|+C_{\varepsilon/8}+\frac{\varepsilon}{8}|\eta_n|^2+1 .
\]
Hence $n^2|\eta_n|^2$ is bounded, so $\eta_n\to0$, and
\eqref{eq:penalized-subsolution} then bounds $P_n$ above by a constant
$C_\varepsilon$.  By \eqref{eq:drift-cost}, $\mu_n^+\to0$.  By the upper bound
on $P_n$, $2n(1-nr_n)\mu_n\ge n^2|\eta_n|^2-C_\varepsilon\ge-C_\varepsilon$,
so $\mu_n^-\to0$.  Thus $\mu_n\to0$ and $\eta_n\to0$.  Returning to
\eqref{eq:penalized-subsolution} with the lower bound $P_n\ge-C_1$,
\[
 -\partial_t\phi(t_n,x_n)-\mathcal B^{a_n}(t_n,x_n,r_n;q_n,M_n)
 \le\varepsilon C_1+n^{-1}.
\]
Since $\mu_n\to0$ and $\eta_n\to0$, Assumption
\ref{ass:generator-domination} applied at $(t_0,x_0)$ with the jets
$(q_n,M_n)\to(D\phi,D^2\phi)(t_0,x_0)$ gives
\[
 -\partial_t\phi(t_0,x_0)
 -H_0(t_0,x_0,D\phi(t_0,x_0),D^2\phi(t_0,x_0))\le\varepsilon C_1 .
\]
Letting $\varepsilon\downarrow0$ proves the subsolution inequality.

\emph{Supersolution property.}
Suppose that $\underline V-\phi$ has a strict local minimum equal to zero at
$(t_0,x_0)$.  Apply Lemma \ref{lem:boundary-contact}(ii) to the test functions
$\phi(t,x)-nr$.  The resulting contact points satisfy
\eqref{eq:lower-contact-points}.  Fix
$a_0\in A_0(t_0,x_0)$.  By Assumption \ref{ass:boundary-controls}, choose
$a_n\in A_0(t_n,x_n)$ with $a_n\to a_0$.  At points with $r_n>0$, the interior
supersolution inequality evaluated at $a_n$ gives the inequality below; at
points with $r_n=0$, it follows from the boundary part of Assumption
\ref{ass:state-constraint-viscosity}.  Thus
\[
 0\le -\partial_t\phi(t_n,x_n)
      -\mathcal B^{a_n}(t_n,x_n,r_n;D\phi,D^2\phi)
      +n\mu(t_n,x_n,r_n,a_n).
\]
Since $\mu(t_n,x_n,0,a_n)=0$, Lipschitz continuity and
$nr_n\to0$ imply that the last drift term tends to zero.  We obtain
\[
 -\partial_t\phi(t_0,x_0)
 -\mathcal B^{a_0}(t_0,x_0,0;D\phi,D^2\phi)\ge0.
\]
This holds for every $a_0\in A_0(t_0,x_0)$ and therefore gives the
supersolution inequality for \eqref{eq:boundary-HJB}.

Assumption \ref{ass:terminal-limit} gives the terminal inequalities.  When the
drift and diffusion coefficients and rewards are bounded, \eqref{eq:terminal-state-estimate} follows
from the standard short-time estimate, so the terminal-limit assumption follows
from the uniform continuity of $\Phi$.

Comparison yields $\overline V\le\underline V$, while the reverse inequality is
automatic.  Thus \eqref{eq:boundary-envelope-equality} holds.  Since $v_0$ is
continuous, a compactness argument gives the locally uniform convergence
\eqref{eq:local-uniform-boundary}.  If the boundary control value is a viscosity
solution of the same terminal-value problem, comparison identifies it with
$v_0$.  The proof is local near $r=0$ and is unchanged in the presence of a
second face $r=\ell(t,x)>0$.
\end{proof}

\subsection{Identification of the boundary value without dynamic programming}
\label{sec:perron}

The identification $v_0=\cV$ in
Theorem~\ref{thm:boundary-continuity} requires $\cV$ to belong to the
comparison class and to be a viscosity solution of
\eqref{eq:boundary-HJB} with terminal datum $\Phi(\cdot,0)$.
We provide sufficient conditions for this viscosity property by adapting
stochastic Perron's method \cite{BayraktarSirbu2013}, without assuming
a dynamic programming principle for the boundary problem.

Throughout this subsection, admissible boundary controls are bounded
predictable processes satisfying $a_s\in A_0(s,\bar X_s)$ for almost
every $s<T$, with a deterministic bound that may depend on the control.
Let $\cV_{\mathrm b}$ denote their value on a fixed probability space
and $\cV_{\mathrm w}$ the corresponding value over weak admissible
controls \cite[Definition 4.1]{BayraktarSirbu2013}. If the control
family in \eqref{eq:boundary-value} is larger, the results below
identify $\cV$ with these values provided that restricting to bounded
boundary controls does not change the value.

\begin{assumption}[Boundary feedback selections]\label{ass:selection}
The set $A\subset\RR^m$ is closed, and $A_0(t,x)$ is nonempty for $t<T$.
For every $(t_0,x_0)\in[0,T)\times\RR^d$
and $a_0\in A_0(t_0,x_0)$ there are a neighborhood $N$ of $(t_0,x_0)$ in
$[0,T)\times\RR^d$ and a measurable map $\bar a:N\to A$, continuous at $(t_0,x_0)$, with
$\bar a(t_0,x_0)=a_0$ and $\bar a(s,x)\in A_0(s,x)$ on $N$ such that, from
every stopping time $\tau$ and $\mathcal F_\tau$-measurable initial state in
$N$, the feedback equation \eqref{eq:X-boundary} with
$a^0_s=\bar a(s,\bar X_s)$ has a strong solution up to the exit time from
$N$.  Moreover, from every stopping time and
$\mathcal F_\tau$-measurable initial state there is an admissible boundary
control on $[\tau,T]$.
\end{assumption}

The selections imply lower semicontinuity of $A_0$ before $T$.  When $A_0$
has closed convex values, lower semicontinuity yields continuous selections
through every point by Michael's theorem \cite{Michael1956}.  A sufficient
condition for the local strong existence required here is local Lipschitz
continuity in $x$ of the composed coefficients
$b(s,x,0,\bar a(s,x))$ and $\sigma(s,x,0,\bar a(s,x))$, uniformly in $s$
on compact sets.  Assumption 2.10 of Bouchard, Gnecco Heredia, Moreau and
Pham \cite{BGHMP2026} supplies preterminal feedbacks in their model; their
Remark 2.11 gives additional sufficient conditions when the diffusion is
control-independent.  Finite pasting at stopping times preserves the
boundary constraint and boundedness, with the maximum of the finitely many
control bounds.

Fix the smooth growth function $\psi(x)=(1+|x|^2)^k$, $k\ge0$.  Weak
admissible controls are required to satisfy
$\EE\sup_s\psi(\bar X_s)<\infty$.  A function has growth $\psi$ if its
absolute value is bounded by a constant times $\psi$.  Write
$J^a_{\tau,\rho}:=\int_\tau^\rho f(s,\bar X_s,0,a_s)\,ds$.

\begin{definition}[Stochastic subsolutions and supersolutions]
\label{def:perron-classes}
A continuous function $v$ of growth $\psi$ with
$v(T,\cdot)\le\Phi(\cdot,0)$ is a stochastic subsolution if there is
$L(v)<\infty$ such that, for every stopping time $\tau$ and
$\mathcal F_\tau$-measurable $\xi$ with $\EE\psi(\xi)<\infty$, there is an
admissible boundary control on $[\tau,T]$, bounded by $L(v)$, for which
\[
 v(\tau,\xi)\le
 \EE\bigl[v(\rho,\bar X_\rho)+J^a_{\tau,\rho}\mid\mathcal F_\tau\bigr]
 \quad\hbox{for every stopping time }\rho\in[\tau,T].
\]
A continuous function $v$ of growth $\psi$ with
$v(T,\cdot)\ge\Phi(\cdot,0)$ is a stochastic supersolution if
\[
 v(\tau,\bar X_\tau)\ge
 \EE\bigl[v(\rho,\bar X_\rho)+J^a_{\tau,\rho}\mid\mathcal F_\tau\bigr]
 \quad\hbox{for all stopping times }\tau\le\rho
\]
along every weak admissible boundary control.
\end{definition}

These are the definitions of Bayraktar and S\^{\i}rbu
\cite[Sections 3 and 4]{BayraktarSirbu2013}, with running rewards included
as in their Remark 2.1.

\begin{theorem}[Stochastic Perron for the boundary problem]
\label{prop:perron}
Suppose Assumption \ref{ass:selection} holds, $H_0$ is finite and continuous
for $t<T$, and, for every compact $K\subset\RR^d$ and $Q<\infty$,
\[
 \sup_{\substack{0\le t<T,\ x\in K\\ |q|+|M|\le Q}}H_0(t,x,q,M)<\infty.
\]
Suppose
$b(\cdot,\cdot,0,\cdot)$, $\sigma(\cdot,\cdot,0,\cdot)$, $f(\cdot,\cdot,0,\cdot)$,
and $\Phi(\cdot,0)$ are continuous up to $T$, and $b,\sigma$ are Lipschitz
in $x$ with linear growth, uniformly for $t\in[0,T]$ and $a$ in bounded
sets.  Assume
\begin{equation}\label{eq:gauge}
 |\Phi(x,0)|\le C\psi(x),\qquad
 \sup_{|a|\le L}|f(t,x,0,a)|\le C_L\psi(x),
\end{equation}
and suppose both classes in Definition \ref{def:perron-classes} are
nonempty.  Suppose comparison holds
for \eqref{eq:boundary-HJB} with terminal datum $\Phi(\cdot,0)$ between upper
semicontinuous subsolutions and lower semicontinuous supersolutions of growth
$\psi$.  Then $\cV_{\mathrm b}=\cV_{\mathrm w}$ is the unique continuous
viscosity solution of growth $\psi$ of this terminal-value problem.  If
bounded boundary controls suffice in \eqref{eq:boundary-value}, this solution
is $\cV$ and, under the hypotheses of Theorem
\ref{thm:boundary-continuity}, $v_0=\cV$.
The same assertion holds on an open state domain $O$ when the admissible
processes remain in $O$, the stated growth and integrability conditions hold,
and comparison includes the relevant conditions on $\partial O$.
\end{theorem}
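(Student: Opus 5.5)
We follow the stochastic Perron scheme of Bayraktar and S\^{\i}rbu \cite{BayraktarSirbu2013} for the boundary dynamics \eqref{eq:X-boundary}. Let $\mathcal U^-$ and $\mathcal U^+$ denote the (nonempty) classes of stochastic subsolutions and supersolutions from Definition~\ref{def:perron-classes}. Set $v^-:=\sup_{v\in\mathcal U^-}v$ and $v^+:=\inf_{v\in\mathcal U^+}v$.

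The first step is the sandwich $v^-\le\cV_{\mathrm b}\le\cV_{\mathrm w}\le v^+$. For $v\in\mathcal U^-$, take $\tau=t$, $\xi=x$ and $\rho=T$, and use the bounded admissible control from the definition. Together with $v(T,\cdot)\le\Phi(\cdot,0)$ this gives $v(t,x)\le\cV_{\mathrm b}(t,x)$. The middle inequality holds because strong controls are weak controls. For $v\in\mathcal U^+$, the supermartingale property along any weak control, with $\rho=T$ and $v(T,\cdot)\ge\Phi(\cdot,0)$, gives $\cV_{\mathrm w}\le v$. Growth control comes from \eqref{eq:gauge} and the moment bound on $\psi(\bar X)$, which in turn comes from Lipschitz and linear growth. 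Next we check stability under pairwise max and min. For min of supersolutions this is immediate. For max of subsolutions, paste the two controls on the $\mathcal F_\tau$-measurable sets where each one attains the max; the bound is the maximum of the two $L(v)$. A Choquet/countable-sup argument then yields monotone sequences $v_n\uparrow v^-$ and $w_n\downarrow v^+$. Hence $v^-$ is lower semicontinuous and $v^+$ is upper semicontinuous, both of growth $\psi$.

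The core of the proof is to show that $v^-$ is a viscosity supersolution and $v^+$ a viscosity subsolution of \eqref{eq:boundary-HJB}, by contradiction via local bumps. Suppose $v^+$ fails the subsolution inequality at $(t_0,x_0)$ with a test function $\phi$ touching from above. Then $-\partial_t\phi-\mathcal B^a(\cdot;D\phi,D^2\phi)>0$ for every $a\in A_0$ in a neighborhood. Here we use continuity of $H_0$ and the local uniform bound on $H_0$, which handles arbitrary (possibly unbounded) $a\in A_0$ through $\sup_a\mathcal B^a=H_0$. Consider $\phi-\delta$ on a small ball and $\min(w_n,\phi-\delta)$ outside, for $n$ large. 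By Itô's formula this is a supersolution along every weak control, since any control satisfies $a_s\in A_0(s,\bar X_s)$. This contradicts the minimality of $v^+$. For the supersolution property of $v^-$, suppose it fails at $(t_0,x_0)$ with $\phi$ touching from below. Then there is some $a_0\in A_0(t_0,x_0)$ with $-\partial_t\phi-\mathcal B^{a_0}<0$. Assumption~\ref{ass:selection} supplies a feedback $\bar a$ that is continuous at $(t_0,x_0)$, so the strict inequality persists on a small ball $N$ along the feedback. The bumped function $\max(v_n,\phi+\delta)$ then uses the feedback while inside the ball and switches to $v_n$'s control, or to any admissible control, at the exit time. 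This produces a stochastic subsolution exceeding $v^-$, a contradiction.

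The main obstacle lies here: the feedback must be bounded on $N$ so that the pasted control is bounded by a deterministic $L$, and the stopping-time pasting must respect $a_s\in A_0(s,\bar X_s)$. We will shrink $N$ to a compact set where $\bar a$ stays near $a_0$, and use the local strong existence up to the exit time. The terminal inequalities are handled by the same bump construction near $t=T$, using continuity of coefficients and of $\Phi(\cdot,0)$ up to $T$, as in \cite[Section 4]{BayraktarSirbu2013}. They give $v^+(T,\cdot)\le\Phi(\cdot,0)\le v^-(T,\cdot)$ in the viscosity sense. Comparison then yields $v^+\le v^-$, so all four functions coincide and form a continuous viscosity solution of growth $\psi$, unique by comparison. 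If bounded controls suffice, this solution equals $\cV$. Theorem~\ref{thm:boundary-continuity}, applied with this solution in the comparison class, gives $v_0=\cV$. The open-domain version follows by running the same argument with processes confined to $O$.
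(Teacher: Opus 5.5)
Your outline matches the paper's proof: the sandwich $v^-\le\cV_{\mathrm b}\le\cV_{\mathrm w}\le v^+$, closure under $\max$ and $\min$ with bound $\max\{L(v_1),L(v_2)\}$, the Bayraktar--S\^{\i}rbu bumps, and comparison. In the bumps, no selection is needed for $v^+$, while for $v^-$ you use the feedback $\bar a$ through $a_0$, kept bounded by continuity of $\bar a$ at $(t_0,x_0)$. There are two slips in the interior steps.
\begin{itemize}
\item The bump for $v^+$ is $\min(w_n,\phi-\delta)$ on the ball and $w_n$ outside it, after the Dini argument gives $w_n<\phi-\delta$ on an annulus. You wrote it the other way round, and that function is not a supersolution outside the ball.
\item At the exit time you must continue with the control supplied by $v_n$, not ``any admissible control''. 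The subsolution inequality after exit comes from $v_n$.
\end{itemize}

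The real gap is the terminal step. This is exactly where the theorem departs from \cite{BayraktarSirbu2013}, whose controls are compact, so ``the same bump construction using continuity of the coefficients up to $T$'' does not suffice.
\begin{itemize}
\item For $v^+(T,x_0)\le\Phi(x_0,0)$, the barrier $\phi=c+|x-x_0|^2/\eta+K(T-t)$ must be a supermartingale, with running reward, along \emph{every} weak admissible control. Those controls take values in the possibly unbounded sets $A_0(t,x)$. This requires $K\ge H_0(t,x,D\phi,D^2\phi)$ throughout the cylinder with $t<T$. Continuity of $b,\sigma,f$ up to $T$ gives no such bound, since $H_0$ is only assumed continuous for $t<T$. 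That is precisely the role of the hypothesis $\sup_{0\le t<T,\,x\in K,\,|q|+|M|\le Q}H_0<\infty$. You invoked it in the interior step, where continuity of $H_0$ at $t_0<T$ already suffices.
\item For $v^-(T,x_0)\ge\Phi(x_0,0)$, the interior construction cannot be reused. Assumption~\ref{ass:selection} supplies feedbacks only near points with $t_0<T$. Constant controls are not admissible, because $a_s\in A_0(s,\bar X_s)$ is required. The paper instead runs, inside the cylinder, the control supplied by a fixed stochastic subsolution $v$, which is bounded by $L(v)$. It takes $\phi=c-|x-x_0|^2/\eta-K(T-t)$ with $K$ so large that $\partial_t\phi+\mathcal B^a\ge0$ for all $a$ with $|a|\le L(v)$, using boundedness of the coefficients on compact state-control sets. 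This is where the deterministic bound in Definition~\ref{def:perron-classes} is used.
\end{itemize}
These two barriers give the terminal inequalities pointwise, which is the form the comparison hypothesis consumes.
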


\begin{proof}
Let
\begin{align*}
 v^-&:=\sup\{v:\hbox{stochastic subsolution}\},\\
 v^+&:=\inf\{v:\hbox{stochastic supersolution}\}.
\end{align*}
Taking $\tau=t$, $\rho=T$
in the definitions gives $v^-\le\cV_{\mathrm b}\le\cV_{\mathrm w}\le v^+$.
Fix one member of each class.  They bound both envelopes from below and above,
so the envelopes are finite and have growth $\psi$.

The maximum of two stochastic subsolutions is a stochastic subsolution: the
pasted control of \cite[Proposition 3.1]{BayraktarSirbu2013} is admissible
with bound $\max\{L(v_1),L(v_2)\}$.  The minimum of two stochastic
supersolutions is one as well.  Hence $v^-$ and $v^+$ are monotone limits of
elements of the respective classes, and the Dini arguments of
\cite{BayraktarSirbu2013} apply.

\emph{$v^+$ is a viscosity subsolution.}  Suppose $v^+-\phi$ has a strict
local maximum equal to zero at $(t_0,x_0)$, $t_0<T$, with
$-\partial_t\phi-H_0(t_0,x_0,D\phi,D^2\phi)>0$.  By continuity of $H_0$
there is a ball $B$ around $(t_0,x_0)$ on which
$-\partial_t\phi-\mathcal B^a(s,x,0;D\phi,D^2\phi)>0$ for all
$a\in A_0(s,x)$.  Since every admissible boundary control satisfies
$a_s\in A_0(s,\bar X_s)$, It$\hat{o}$'s formula shows that
$\phi(s,\bar X_s)+J^a_{t,s}$ is a supermartingale up to the exit time from
$B$ for every weak admissible control.  This is the only property of the
control used in Step 2 of \cite[Theorem 4.1]{BayraktarSirbu2013}, which then
produces a stochastic supersolution strictly below $v^+$ at $(t_0,x_0)$, a
contradiction.  No selection is required in this step.

\emph{$v^-$ is a viscosity supersolution.}  Suppose $v^--\phi$ has a strict
local minimum equal to zero at $(t_0,x_0)$, $t_0<T$, with
$-\partial_t\phi-H_0(t_0,x_0,D\phi,D^2\phi)<0$.  Then some
$a_0\in A_0(t_0,x_0)$ satisfies
$-\partial_t\phi-\mathcal B^{a_0}(t_0,x_0,0;D\phi,D^2\phi)<0$.  Let
$\bar a$ be the selection of Assumption \ref{ass:selection} through $a_0$.
By continuity of the coefficients and of $\bar a$ at $(t_0,x_0)$ there is a
ball $B\subset N$ on which
$-\partial_t\phi-\mathcal B^{\bar a(s,x)}(s,x,0;D\phi,D^2\phi)<0$.  Given a
stochastic subsolution $v$ and an initial state in $B$, use the feedback
$\bar a(s,\bar X_s)$ until the exit time $\theta$ from $B$ and then the
control provided by $v$ from $(\theta,\bar X_\theta)$.  The result is an
admissible bounded boundary control, and It$\hat{o}$'s formula on $[\tau,\theta]$
gives the submartingale inequality for $\phi$.  This replaces the constant
control $\tilde u$ in Step 2 of \cite[Theorem 3.1]{BayraktarSirbu2013}; the
remaining construction of a stochastic subsolution strictly above $v^-$ at
$(t_0,x_0)$ is unchanged.

\emph{Terminal conditions.}  Suppose $v^-(T,x_0)<\Phi(x_0,0)$.  Choose
$c$ strictly between these values and $\delta,h>0$ such that
$\Phi(x,0)>c$ on $\overline B_\delta(x_0)$, and put
$Q=[T-h,T]\times\overline B_\delta(x_0)$.  Fix a stochastic subsolution
$v$ and set $\phi(t,x)=c-|x-x_0|^2/\eta-K(T-t)$.
First choose $\eta>0$ small enough that $\phi<v$ on the lateral boundary
of $Q$, and then choose $K$ large enough that the same inequality holds on
the bottom face $t=T-h$.  Increase $K$ further so that
\[
 \partial_t\phi+\mathcal B^a(t,x,0;D\phi,D^2\phi)\ge0
 \quad\hbox{on }Q\cap\{t<T\},\quad a\in A,\ |a|\le L(v).
\]
This is possible because the coefficients and reward are bounded on compact
state and control sets.  Define $\widetilde v=v\vee\phi$ on $Q$ and
$\widetilde v=v$ outside.  This function is continuous, has growth $\psi$,
and satisfies the terminal inequality.  On the event that $\phi$ is selected
at the starting point, use a control supplied by $v$ until exit from $Q$,
then concatenate a control supplied by $v$ at the exit point.  Both have
bound $L(v)$.  It$\hat{o}$'s formula for $\phi$ up to exit and the defining inequality
for $v$ afterward prove that $\widetilde v$ is a stochastic subsolution.
On the complementary event use the control of $v$ throughout.  This
contradicts $\widetilde v(T,x_0)\ge c>v^-(T,x_0)$.

If $v^+(T,x_0)>\Phi(x_0,0)$, choose $c$ strictly between these values and
a cylinder $Q$ on whose terminal face $\Phi<c$.  Fix a stochastic
supersolution $v$ and use
$\phi(t,x)=c+|x-x_0|^2/\eta+K(T-t)$.  Choose $\eta$ small and $K$ large
so that $\phi>v$ on the lateral and bottom faces.  The assumed upper bound
on $H_0$ near $T$ permits increasing $K$ so that
$-\partial_t\phi-H_0(t,x,D\phi,D^2\phi)\ge0$ on $Q\cap\{t<T\}$.
The function $v\wedge\phi$ on $Q$, extended by $v$ outside, is then a
stochastic supersolution by It$\hat{o}$'s formula and stopping at the exit time,
as in \cite[Theorem 4.1]{BayraktarSirbu2013}.  Its value at $(T,x_0)$ is
below $v^+(T,x_0)$, a contradiction.  Hence
$v^-(T,\cdot)\ge\Phi(\cdot,0)\ge v^+(T,\cdot)$.

\emph{Conclusion.}  Comparison gives $v^+\le v^-$, so
$v^-=\cV_{\mathrm b}=\cV_{\mathrm w}=v^+$ is upper and lower
semicontinuous, hence continuous, and has growth $\psi$.  It is the unique
viscosity solution in that growth class.  If bounded boundary controls
suffice, this solution is $\cV$, and the last clause of Theorem
\ref{thm:boundary-continuity} applies.  The arguments on an open state
domain are the same, using cylinders with spatial closure contained in $O$.
\end{proof}

\paragraph{Verifying nonemptiness}
A sufficient condition for a stochastic subsolution is the existence of
$L_0<\infty$ such that, from every stopping time and integrable initial
state as in Definition \ref{def:perron-classes}, an admissible boundary
control bounded by $L_0$ is available.  Linear growth of the coefficients
bounds the absolute value of the generator of $\psi$ by $C_{L_0}\psi$.
Together with \eqref{eq:gauge}, It$\hat{o}$'s formula then shows that
$-Ce^{K(T-t)}\psi(x)$ is a stochastic subsolution for fixed sufficiently
large $C,K$, independent of the starting time and state.
A stochastic supersolution is a constant plus a multiple of $T-t$ when
$\Phi(\cdot,0)$ and $\sup_{a\in A_0(t,x)}f(t,x,0,a)$ are bounded above.
More generally, $Ce^{K(T-t)}\psi$ works if
\[
 \sup_{a\in A_0(t,x)}
 \left\{b\cdot D\psi+\tfrac12\Tr(\sigma\sigma^\top D^2\psi)\right\}
 \le C_0\psi,\qquad
 \sup_{a\in A_0(t,x)}f\le C_0\psi.
\]
The moment condition and \eqref{eq:gauge} justify localization and passage
to the limit in these stochastic inequalities.

\paragraph{Dirichlet formulation}
Once $v_0=\cV$, Theorem \ref{thm:boundary-continuity} supplies the boundary
condition $u(t,x,0)=\cV(t,x)$ for the interior HJB equation stated in the
introduction.  If $V^*$ and $V_*$ satisfy the interior viscosity inequalities
and the terminal inequalities for $\Phi$, and comparison holds for that
Dirichlet problem in their growth class, then $V^*=V_*$ is its unique
continuous viscosity solution in that class.

\subsection{Bouchard, Elie and Imbert \texorpdfstring{\cite{BEI2010}}{}}\label{sec:BEI}

For a smooth target boundary, write
\[
 N^a=\sigma_Y^a-Dw\,\sigma_X^a,
 \qquad
 M^a=b_Y^a-L_X^a w.
\]
Here
\[
 L_X^a w=\partial_t w+b_X^a\cdot Dw
       +\frac12\Tr(\sigma_X^a(\sigma_X^a)^\top D^2w).
\]
Assumption 3.3 of Bouchard, Elie and Imbert \cite{BEI2010} requires the set
$\{a:N^a=0\}$ to be a singleton, represented by a locally Lipschitz control.
This singleton requirement belongs to their smooth Dirichlet subsection.  Their
earlier Theorem 3.1 and Corollary 3.1 allow a set of zero-volatility controls in
the constrained boundary operator, without identifying a lower-dimensional
Dirichlet value.
At points where their selected control $\widehat a$ lies in the interior of the
control set, the lower-bound part of their Assumption 3.4 is
\eqref{eq:bei-local-bound}, and their stochastic-target equation gives
$M^{\widehat a}=0$.  Thus
$A_0(t,x)=\{\widehat a(t,x)\}$ in the Dirichlet case considered here.

Theorem \ref{thm:boundary-continuity} uses the same local estimate and replaces
the singleton condition in that Dirichlet reduction by compactness and
continuity of the correspondence $A_0(t,x)$.  In the subsolution proof, the
penalization produces a sequence of controls.  Their Dirichlet proof uses
uniqueness to identify its limit; here compactness gives a limit in $A_0(t,x)$,
and the resulting boundary operator is the supremum over the full set.  Their bounded-control alternative
in Assumption 3.2 gives the compact case of Proposition
\ref{prop:compact-coercive}.  The next lemma records the calculation for their
unbounded-control alternative.

\begin{lemma}[Unbounded controls in Bouchard, Elie and Imbert \cite{BEI2010}]
\label{lem:BEI-coercivity}
Suppose $w\in C^{1,2}$ and the unbounded-control alternative in Assumption 3.2
of Bouchard, Elie and Imbert \cite{BEI2010} holds.  After the change of variables
$R=Y-w(t,X)$, the transformed coefficients satisfy Assumption
\ref{ass:coercive-controls}, with no running reward as in their model.
\end{lemma}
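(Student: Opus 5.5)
The plan is to write out the transformed coefficients explicitly and read off both limits in Assumption \ref{ass:coercive-controls} from the growth conditions of the unbounded alternative. I take that alternative of \cite[Assumption 3.2]{BEI2010} in its usual form: $A\subset\RR^m$ is closed, and, locally uniformly in the state $(t,x,y)$, $|\sigma_Y^a|\to\infty$ as $|a|\to\infty$ with
\[
 \frac{|b_X^a|+|b_Y^a|+|\sigma_X^a|^2}{|\sigma_Y^a|^2}\longrightarrow0 .
\]
If their statement is phrased slightly differently, only the bookkeeping below changes.

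Under $R=Y-w(t,X)$, It\^o's formula gives, with all coefficients evaluated at $(t,x,w(t,x)+r,a)$,
\[
 \eta^a=N^a=\sigma_Y^a-Dw\,\sigma_X^a,\qquad
 \mu^a=M^a=b_Y^a-L_X^aw,\qquad
 \mathcal B^a(t,x,r;q,M)=b_X^a\cdot q+\tfrac12\Tr(\sigma_X^a(\sigma_X^a)^\top M),
\]
since there is no running reward. Fix a compact $K\subset[0,T)\times\RR^d$ and $r_0>0$. Because $w\in C^{1,2}$, the set $\{(t,x,w(t,x)+r):(t,x)\in K,\ 0\le r\le r_0\}$ is compact. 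Hence the local uniformity in BEI applies there, and $|\partial_tw|$, $|Dw|$ and $|D^2w|$ are bounded on $K$ by a constant $c_K$.

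For \eqref{eq:eta-coercive}, the ratio condition gives $|\sigma_X^a|\le\epsilon|\sigma_Y^a|$ for $|a|\ge\rho_\epsilon$, uniformly on the compact set. Then $|\eta^a|\ge(1-c_K\epsilon)|\sigma_Y^a|$. Taking $\epsilon=1/(2c_K)$ gives $|\eta^a|\ge\frac12|\sigma_Y^a|\to\infty$ uniformly.

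For \eqref{eq:coercive-ratio}, the same inequality gives $|\sigma_Y^a|^2\le4|\eta^a|^2$ for large $|a|$, so it suffices to divide by $1+|\sigma_Y^a|^2$. Expanding $L_X^aw$,
\[
 |\mu^a|\le|b_Y^a|+c_K\bigl(1+|b_X^a|+|\sigma_X^a|^2\bigr),\qquad
 |\mathcal B^a|\le Q\bigl(|b_X^a|+|\sigma_X^a|^2\bigr).
\]
Each term on the right is either a constant or $o(|\sigma_Y^a|^2)$, uniformly over the compact set and $|q|+|M|\le Q$. The constants are $o(1+|\sigma_Y^a|^2)$ because $|\sigma_Y^a|\to\infty$. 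This yields \eqref{eq:coercive-ratio}. Closedness of $A$ is inherited from BEI.

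I expect the main obstacle to be bookkeeping rather than analysis. The BEI hypotheses are stated in the original variables $(x,y)$, so I have to check that their uniformity is over compact sets of $(t,x,y)$. The graph strip $y\in[w(t,x),w(t,x)+r_0]$ over $K$ is such a set by continuity of $w$. I also have to check that the mixed term $Dw\,\sigma_X^a$ cannot cancel $\sigma_Y^a$, which is exactly what the subquadratic growth of $\sigma_X$ relative to $\sigma_Y$ rules out.
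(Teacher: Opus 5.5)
Your proposal is correct and follows the paper's proof essentially step by step: you bound $|Dw\,\sigma_X^a|$ by $c_K|\sigma_X^a|=o(|\sigma_Y^a|)$ to get coercivity of $\eta^a$, where you use the lower bound $|\eta^a|\ge\frac12|\sigma_Y^a|$ and the paper uses $|\eta^a|/|\sigma_Y^a|\to1$, and then you expand $\mu^a$ and $\mathcal B^a$ and compare each term with $|\sigma_Y^a|^2$. Your remarks on compactness of the graph strip $y\in[w(t,x),w(t,x)+r_0]$ and on closedness of $A$ are bookkeeping points that the paper treats the same way or leaves implicit.
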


\begin{proof}
Fix a compact set of $(t,x)$ and let $0\le r\le r_0$.  The corresponding
resource levels $y=w(t,x)+r$ remain in a compact set.  Assumption 3.2 of
Bouchard, Elie and Imbert \cite{BEI2010} states, locally uniformly in $(x,y)$,
that
\begin{equation}\label{eq:BEI-unbounded-control}
 |\sigma_Y^a|\longrightarrow\infty,
 \qquad
 \frac{|b_Y^a|+|b_X^a|+|\sigma_X^a|^2}{|\sigma_Y^a|^2}
       \longrightarrow0
 \quad\hbox{as }|a|\to\infty.
\end{equation}
The derivatives of $w$ are bounded on the compact set.  Consequently,
\[
 \frac{|Dw\,\sigma_X^a|}{|\sigma_Y^a|}
 \le C\frac{|\sigma_X^a|}{|\sigma_Y^a|}\longrightarrow0.
\]
Since $\eta^a=\sigma_Y^a-Dw\,\sigma_X^a$, it follows that
$|\eta^a|/|\sigma_Y^a|\to1$, proving \eqref{eq:eta-coercive}.

The transformed surplus drift is
\[
 \mu^a=b_Y^a-\partial_t w-b_X^a\cdot Dw
       -\frac12\Tr(\sigma_X^a(\sigma_X^a)^\top D^2w).
\]
For bounded $(q,M)$, the part of the transformed generator that remains on the
boundary is
\[
 \mathcal B^a=b_X^a\cdot q
       +\frac12\Tr(\sigma_X^a(\sigma_X^a)^\top M).
\]
Equation \eqref{eq:BEI-unbounded-control} and
$|\eta^a|^2/|\sigma_Y^a|^2\to1$ therefore give, locally uniformly,
\[
 \frac{|\mu^a|+|\mathcal B^a|}{1+|\eta^a|^2}\longrightarrow0.
\]
This is \eqref{eq:coercive-ratio}.
\end{proof}

\begin{corollary}[Bouchard, Elie and Imbert \cite{BEI2010}]\label{cor:BEI}
In the smooth setting of Bouchard, Elie and Imbert \cite{BEI2010}, suppose their
Assumptions 3.2--3.4 hold and the selected control $\widehat a$ is interior at
every point.  More generally, suppose $M^{\widehat a}=0$ and the lower-bound
condition in their Assumption 3.4 holds at every point without the interior-control
qualification.  Suppose the terminal inequalities
in their Proposition 3.2 hold and the equation
\begin{equation}\label{eq:BEI-boundary-equation}
       -L_X^{\widehat a}v=0
\end{equation}
has comparison in the relevant growth class.  Then the upper and lower limits
of the constrained value on $y=w(t,x)$ coincide with the solution of
\eqref{eq:BEI-boundary-equation}.
\end{corollary}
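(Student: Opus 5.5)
The plan is to derive Corollary~\ref{cor:BEI} from Theorem~\ref{thm:boundary-continuity}, applied to the transformed surplus $R=Y-w(t,X)$, by checking its hypotheses one at a time in the smooth setting of \cite{BEI2010}.

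\emph{Boundary controls and Hamiltonian.} In the new coordinates the surplus coefficients are $\eta^a=N^a$ and $\mu^a=M^a$, both evaluated at $y=w(t,x)+r$. By their Assumption 3.3, the set $\{a:N^a=0\}$ is the singleton $\{\widehat a(t,x)\}$. By hypothesis $M^{\widehat a}=0$, which holds automatically at interior points by their stochastic-target equation. Hence $A_0(t,x)=\{\widehat a(t,x)\}$. Since $\widehat a$ is locally Lipschitz, $A_0$ is continuous, so Assumption~\ref{ass:boundary-controls} holds. There is no running reward, so $H_0(t,x,q,M)=\mathcal B^{\widehat a}(t,x,0;q,M)$. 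The boundary equation \eqref{eq:boundary-HJB} is then $-\partial_t v-b_X^{\widehat a}\cdot Dv-\tfrac12\Tr(\sigma_X^{\widehat a}(\sigma_X^{\widehat a})^\top D^2v)=0$, which is \eqref{eq:BEI-boundary-equation}.

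\emph{Drift, growth, and domination.} By hypothesis, the lower bound \eqref{eq:bei-local-bound} holds at every point. By Lemma~\ref{lem:drift-equivalence} this is \eqref{eq:local-lower-bound}. Their Assumption 3.2 supplies one of two alternatives. In the bounded alternative $A$ is compact. In the unbounded alternative, Lemma~\ref{lem:BEI-coercivity} gives Assumption~\ref{ass:coercive-controls}. In either case Proposition~\ref{prop:compact-coercive} yields Assumptions~\ref{ass:drift-cost}, \ref{ass:generator-growth}, and \ref{ass:generator-domination}. Assumption~\ref{ass:basic} transfers from their standing continuity and Lipschitz hypotheses, because $w\in C^{1,2}$. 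For instance, Lipschitz regularity of $\mu,\eta$ in $r$ follows from Lipschitz regularity of their coefficients in $y$.

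\emph{Viscosity inequalities and conclusion.} The step I expect to be the main obstacle is Assumption~\ref{ass:state-constraint-viscosity}. This is the place where the paper's remark that the corollary uses "their full-Hamiltonian subsolution inequality ... and dynamic programming" enters.
\begin{itemize}
\item For the subsolution part, their Theorem~3.1 gives a subsolution property of $V^*$ up to the boundary for the full Hamiltonian. I will pull the test functions \eqref{eq:concave-surplus-tests} back to $(x,y)$ through $\psi(t,x,y-w(t,x))$. The chain rule converts their Hamiltonian into $\sup_a(\partial_t\psi+\mathscr L^a\psi)$, and the supremum yields $a_\delta$ for every $\delta>0$.
\item For the supersolution part, when $r>0$ I will use the interior supersolution property. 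When $r=0$ I will use the constrained boundary operator of their Theorem~3.1, whose admissible controls are the zero-volatility controls. Restricted to $A_0$, these give exactly the constrained inequality required. If their boundary operator is instead stated only through $N^a=0$, I will argue by dynamic programming: the constant feedback $\widehat a$ keeps $R$ at zero up to a short stopping time, and Itô's formula on a test function touching from below gives the inequality for $a\in A_0$.
\end{itemize}
Local boundedness of $V$ follows from their growth assumptions. The terminal inequalities of their Proposition~3.2 are Assumption~\ref{ass:terminal-limit}, and comparison for \eqref{eq:BEI-boundary-equation} is assumed. Theorem~\ref{thm:boundary-continuity} then gives $\overline V=\underline V=v_0$. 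These are precisely the upper and lower limits of the constrained value on the graph $y=w(t,x)$.
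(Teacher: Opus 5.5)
Your overall route is the paper's: identify $A_0=\{\widehat a\}$, obtain Assumptions \ref{ass:drift-cost}--\ref{ass:generator-domination} from Proposition~\ref{prop:compact-coercive} (with Lemma~\ref{lem:BEI-coercivity} in the unbounded case), and apply Theorem~\ref{thm:boundary-continuity}. The gap is in the subsolution clause of Assumption~\ref{ass:state-constraint-viscosity}, which you correctly flag as the obstacle. Your step ``the supremum yields $a_\delta$'' never uses $\beta>0$. Without it, the step fails in the unbounded-control alternative of their Assumption~3.2. There the Hamiltonian is infinite or discontinuous at general jets, so their inequality (3.9) is a viscosity inequality for a semicontinuous envelope of the Hamiltonian. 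From such an envelope inequality you cannot extract a single control at the contact point. The missing idea is the reason Assumption~\ref{ass:state-constraint-viscosity} is stated only for tests \eqref{eq:concave-surplus-tests}. For such $\psi$ the second-order surplus term is $\frac12|\eta^a|^2\partial_{rr}\psi=-\beta|\eta^a|^2$. By \eqref{eq:eta-coercive}--\eqref{eq:coercive-ratio}, the terms $\mu^a\partial_r\psi$ and $\mathcal B^a$ are $o(1+|\eta^a|^2)$, locally uniformly. Hence $\partial_t\psi+\mathscr L^a\psi\to-\infty$ as $|a|\to\infty$, uniformly near the test point, so near-maximizers lie in a fixed compact set. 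The Hamiltonian is then continuous at these jets and equals its envelope, and only then can $a_\delta$ be selected. In the compact alternative your step is fine.

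For the boundary supersolution clause, the paper deliberately does not use the boundary operator of their Theorem~3.1(1) or Corollary~3.1. The clause is needed for $V_*(t,x,0)$ defined here as the $\liminf$ from $r>0$, with the exact generator of each $a\in A_0(t,x)$. Their theorem is a statement about their own lower envelope and their constrained operator. Your primary route does not check that a test touching our $V_*$ from below is a valid test there, nor that their operator reduces to $\mathcal B^{\widehat a}$. Your fallback is the paper's actual argument, but it must be run from interior points:
\begin{enumerate}[label=(\roman*)]
\item take $(t_n,x_n,r_n)$ with $r_n>0$ and $V(t_n,x_n,r_n)\to V_*(t_0,x_0,0)$;
\item use the Markov feedback $a_s=\widehat a(s,X_s)$, not a constant control;
\item note that along this feedback $\mu$ and $\eta$ vanish at $r=0$ and are Lipschitz in $r$, so $R=0$ is an equilibrium and $R$ stays positive up to a local stopping time;
\item concatenate with a feasible continuation and apply the dynamic programming inequality of their Theorem~6.1 together with It\^o's formula.
\end{enumerate}
Two smaller points. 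Assumption~\ref{ass:basic} includes the weak dynamic programming principle, which comes from their Theorem~6.1 rather than from the standing continuity hypotheses. Also, \eqref{eq:bei-local-bound} is simply \eqref{eq:local-lower-bound} in the original variables, so Lemma~\ref{lem:drift-equivalence} is not needed at that step.
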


\begin{proof}
The local Lipschitz property of $\widehat a$ verifies Assumption
\ref{ass:boundary-controls}.  Under either condition stated in the corollary,
the lower bound in their Assumption 3.4 gives \eqref{eq:local-lower-bound},
and their Assumption 3.2 gives compactness or, by Lemma
\ref{lem:BEI-coercivity}, Assumption \ref{ass:coercive-controls}, so
Proposition \ref{prop:compact-coercive} applies.  Their Theorem 6.1 gives
Assumption \ref{ass:basic}.  The subsolution clause of Assumption
\ref{ass:state-constraint-viscosity} is the interior inequality (3.9) of their
Theorem 3.1(2), which holds up to $y=w(t,x)$; under Assumption 3.2 the
strictly concave surplus term dominates for large controls, so the Hamiltonian
is continuous at the tests \eqref{eq:concave-surplus-tests} and a control can
be selected.  The boundary supersolution clause concerns the single control
$\widehat a$ and follows from their Theorem 6.1 with the Markov control
generated by the feedback $\widehat a$, for which $R=0$ is an equilibrium of
the surplus equation, as in the supersolution step of their Section 6.4; the
boundary operator of their Theorem 3.1(1) and Corollary 3.1 is not used.  Since
$A_0=\{\widehat a\}$, the Hamiltonian
\eqref{eq:boundary-Hamiltonian} is the operator in
\eqref{eq:BEI-boundary-equation}.  Theorem
\ref{thm:boundary-continuity} applies.
\end{proof}

\begin{remark}[Bouchard, Gnecco Heredia, Moreau and Pham \cite{BGHMP2026}]\label{rem:BGMP}
In \cite{BGHMP2026} the state $X$ has a compact control $u\in U$, the
constraint is $\EE[G(X_T)+\int_t^T g(X_s,u_s)\,ds]\le p$, and martingale
representation introduces a martingale $P$ with unbounded volatility
$\alpha\in\RR^d$.  Their boundary is $p=\varpi(t,x)+y$, with $y$ the
accumulated running cost and $\varpi$ the constraint value, a classical
solution of $\inf_{u\in U}\{L^u_X\varpi+g\}=0$ by their Proposition 2.8.  The surplus $R=P-\varpi(t,X)-Y$ then has
\[
 \mu^{u,\alpha}=-\bigl(L^u_X\varpi+g(x,u)\bigr)\le0,\qquad
 \eta^{u,\alpha}=\alpha-\sigma(x,u)^\top D\varpi(t,x),
\]
so Assumption \ref{ass:drift-cost} holds with $\mu^+=0$, and
$A_0(t,x)=\{(u,\sigma(x,u)^\top D\varpi(t,x)):u\in\widehat U(t,x)\}$, where
$\widehat U$ is the set of minimizers.  The coefficients of $X$ and the
rewards do not depend on $\alpha$ and $U$ is compact, so Assumption
\ref{ass:generator-growth} holds, and \eqref{eq:sequential-generator-bound}
follows by extracting a convergent subsequence of $u_n$ as in the proof of
Proposition \ref{prop:compact-coercive}.  Their Assumption 2.10 supplies
continuous boundary feedbacks through every control at times $t<T$, with
well-posed dynamics.  These give Assumption \ref{ass:boundary-controls},
continuity of $H_0$ before $T$, and the continuations for the boundary clause of Assumption
\ref{ass:state-constraint-viscosity}, whose subsolution clause follows from
their Theorem 2.5 since the supremum over $\alpha$ at the tests
\eqref{eq:concave-surplus-tests} is attained on a bounded set.  Under their
Assumption 2.10, their Theorem 2.13 is thus the instance of Theorem
\ref{thm:boundary-continuity} with nonpositive surplus drift; their terminal
estimates supply Assumption \ref{ass:terminal-limit}.  This application uses
the preterminal feedbacks and does not require selections at $T$.
\end{remark}

\subsection{Alvarez, Ekren and Huang \texorpdfstring{\cite{AEH2026}}{}}

The risk-neutral, $X$-independent reduction in Section 6 of
Alvarez, Ekren and Huang \cite{AEH2026} has state space
$\mathcal D=\{(t,y,p):\underline W(t)<y<\overline W(t),\ p\in(0,1)\}$.
Use the surplus $R_u=\overline W(t)-Y$ at the upper boundary and
$R_d=Y-\underline W(t)$ at the lower boundary.  The volatility of either surplus
is, up to sign, $z^0-z^1$ and therefore vanishes when $z^0=z^1$.  If
$\overline a$ and $\underline a$ denote the supremum and infimum of
$H^0(z)-H^1(z)$, the surplus drifts at $r=0$ are
\begin{align*}
 \mu_u(z^0,z^1)&=H^0(z^0)-H^1(z^1)-\overline a
   -\overline\lambda(p,z^0,z^1)(z^0-z^1),\\
 \mu_d(z^0,z^1)&=\underline a-H^0(z^0)+H^1(z^1)
   +\overline\lambda(p,z^0,z^1)(z^0-z^1).
\end{align*}
The equality $z^0=z^1$ leaves a continuum of controls.  The endpoint boundary
equation may therefore retain several controls, whereas Assumption 3.3 of Bouchard, Elie and
Imbert \cite{BEI2010} selects a singleton in their smooth Dirichlet setting.
Zero surplus drift and volatility require $H^0(z)-H^1(z)=\overline a$
or $H^0(z)-H^1(z)=\underline a$,
respectively.  Hence the two sets playing the role of $A_0$ are
\begin{align*}
 \mathcal A_u^0&=\{(z,z):H^0(z)-H^1(z)=\overline a\},\\
 \mathcal A_d^0&=\{(z,z):H^0(z)-H^1(z)=\underline a\}.
\end{align*}
The remaining state on either boundary is the belief $p$.  Optimizing its
generator and the reward over these sets gives exactly the operators
$\overline H$ and $\underline H$ in equation (54) of
Alvarez, Ekren and Huang \cite{AEH2026}, with their infimum convention corresponding
to the supremum convention in \eqref{eq:boundary-Hamiltonian}.

\begin{lemma}[Verification for Alvarez, Ekren and Huang \cite{AEH2026}]
\label{lem:AEH-verification}
Under Assumption 2 of Alvarez, Ekren and Huang \cite{AEH2026}, the coefficients
at each endpoint of the credible interval satisfy Assumptions
\ref{ass:drift-cost}, \ref{ass:generator-growth}, and
\ref{ass:generator-domination} after the change to $R_u$ or $R_d$.
\end{lemma}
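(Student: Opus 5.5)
The plan is to verify the three assumptions one endpoint at a time. I treat the upper endpoint with surplus $R_u=\overline W(t)-Y$; the lower endpoint is the mirror image, with $\overline a$ replaced by $\underline a$ and signs flipped. In the risk-neutral, $X$-independent reduction the surplus coefficients should not depend on the level $y$. So $\mu_u$ and $\eta_u$ at level $r>0$ would be the expressions displayed above, evaluated at the current belief $p$. Up to a factor that is bounded on compacts in $p\in(0,1)$, $\eta_u$ should be $\pm(z^0-z^1)$. This reduces everything to estimates in the control pair $(z^0,z^1)$, locally uniformly in $(t,p)$. The $r$-dependence in the assumptions is then trivial, provided the time derivatives of $\overline W$ (and $\underline W$) are continuous, which I would extract from Assumption 2 of \cite{AEH2026}. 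If that assumption does not in fact make the coefficients $y$-independent, this reduction needs revisiting, and $r$ must be tracked through the local Lipschitz bound of Assumption~\ref{ass:basic}.

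For Assumption~\ref{ass:drift-cost}, I split the drift as
\[
\mu_u=\bigl[H^0(z^0)-H^1(z^0)-\overline a\bigr]+\bigl[H^1(z^0)-H^1(z^1)\bigr]-\overline\lambda(p,z^0,z^1)(z^0-z^1).
\]
The first bracket is $\le0$ by the definition of $\overline a$ as a supremum. The second and third terms are bounded by $C|z^0-z^1|\le C'|\eta_u|$. This uses the Lipschitz bound on $H^1$ and the local bound on $\overline\lambda$ that I expect Assumption 2 to provide. If $H^1$ only has a gradient of linear growth, I would instead use convexity to get $H^1(z^0)-H^1(z^1)\le \nabla H^1(z^0)\cdot(z^0-z^1)$, and absorb the growth through the $|\eta|^2$ term. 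Either way this yields $\mu_u^+\le C(|\eta_u|+|\eta_u|^2)$, which is \eqref{eq:drift-cost}. For Assumption~\ref{ass:generator-growth}, the boundary generator acts only on the belief $p$. Its drift and volatility involve $H^0-H^1$, which is bounded between $\underline a$ and $\overline a$ on the diagonal, and the difference $z^0-z^1$. The reward is the principal's risk-neutral payoff in $(z^0,z^1)$. I would bound $\mathcal B^a\le C(1+|\eta|)$ and then use $C|\eta|\le C^2/(4\kappa)+\kappa|\eta|^2$, as noted after Assumption~\ref{ass:generator-domination}. If the reward is only bounded above by a constant plus an $\eta$-dependent penalty, that suffices, since the condition is one-sided.

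For Assumption~\ref{ass:generator-domination}, I would use Lemma~\ref{prop:coefficient-set-test} through the excess condition \eqref{eq:coefficient-set-excess}. Take controls with $|\mu_u|+|\eta_u|\le\delta$. Then $|z^0-z^1|\le C\delta$, and, by the decomposition above, $0\ge H^0(z^0)-H^1(z^0)-\overline a\ge -C\delta$. I must show that such $z^0$ lie within $o(1)$ of the level set $\{H^0-H^1=\overline a\}$. I must also show that the coefficient triple $C^a$ moves by $o(1)$ when $(z^0,z^1)$ is replaced by the nearby diagonal point $(z,z)$ in $\mathcal A^0_u$. This step is the main obstacle: $z$ ranges over a noncompact set, so near-maximizers of $H^0-H^1$ need not be close to exact maximizers. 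I would first try to use the growth and convexity conditions of Assumption 2 to confine near-maximizers of $H^0-H^1$ to a compact set, for instance through the strict convexity or coercivity of $H^0-H^1$ implicit in finiteness of $\overline a$ and $\underline a$. Compactness plus continuity then gives the excess bound, exactly as in the proof of Proposition~\ref{prop:compact-coercive}. Failing that, I would verify the sequential bound \eqref{eq:sequential-generator-bound} directly. This would use uniform continuity of the belief generator and reward in $(z^0,z^1)$ and the fact that they depend on $z^0$ only through quantities such as $H^0(z^0)-H^1(z^0)$ that converge to $\overline a$. Finally, continuity and finiteness of $H_0$ should follow because the $p$-generator restricted to $\mathcal A^0_u$ depends continuously on $p$ and is bounded on the diagonal. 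This identifies $H_0$ with the operator $\overline H$ (respectively $\underline H$) of \cite{AEH2026}.
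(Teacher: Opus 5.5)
Your bound for Assumption~\ref{ass:drift-cost} is the paper's argument: the same splitting, the extremal inequality $H^0-H^1\le\overline a$, Lipschitz $H^1$, and bounded $\overline\lambda$. You also identify the right obstacle for Assumption~\ref{ass:generator-domination}, but neither of your resolutions works, and this is a genuine gap.

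\emph{Your first route is impossible.} Since $\overline a$ and $\underline a$ are both finite, $H^0-H^1$ is bounded on the whole control range, so it is neither coercive nor strictly convex. In this model $H^0,H^1$ are affine outside a compact set. On each tail both effort responses equal the same endpoint $\alpha$ of the compact effort set, so $H^0-H^1$ is constant there. The level set $\{H^0-H^1=\overline a\}$ may therefore be unbounded, near-maximizers are not confined to a compact set, and the argument of Proposition~\ref{prop:compact-coercive} is unavailable.

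\emph{Your fallback assumes what is not available.} It presupposes uniform continuity of the reward in $(z^0,z^1)$, and that the coefficients see $z^0$ only through $H^0(z^0)-H^1(z^0)$. Neither holds. The belief generator is $\frac12\gamma^2M+\ell$ with $\gamma=p(1-p)(A^0(z^0)-A^1(z^1))$, and both $\gamma$ and $\ell$ depend on the effort responses, which the value of $H^0-H^1$ does not determine.

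\emph{What the paper uses instead.} The paper closes the step with two model-specific facts.
\begin{enumerate}[label=(\alph*)]
\item Lemma~3(iii) of \cite{AEH2026} gives $\dist(z_n^0,\{H^0-H^1=\overline a\})\to0$. Hence there are $\widetilde z_n$ in the level set with $|z_n^0-\widetilde z_n|\to0$, and $(\widetilde z_n,\widetilde z_n)\in\mathcal A_u^0$.
\item The generators at $(z_n^0,z_n^1)$ and $(\widetilde z_n,\widetilde z_n)$ are compared directly. For $\gamma$ this uses Lipschitz continuity of $A^0,A^1$. For $\ell$ one splits into subsequences. Where $z_n^0+z_n^1$ is bounded, plain continuity suffices. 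Where it is unbounded, $d_n\to0$ puts all three points on the same affine tail, and $\ell$ reduces to $\alpha-\frac12e^{\kappa(T-t)}(c(0,\alpha)+c(1,\alpha))$, independent of $z^0,z^1,p$.
\end{enumerate}
Thus $\mathcal B^{(z_n^0,z_n^1)}-\mathcal B^{(\widetilde z_n,\widetilde z_n)}\to0$, and $\mathcal B^{(\widetilde z_n,\widetilde z_n)}\le H_0(t_n,p_n,q_n,M_n)$.

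\emph{Continuity of $H_0$.} Your closing claim needs the same tail reduction. A supremum over the noncompact set $\mathcal A_u^0$ of functions continuous in $(t,p)$ and bounded on the diagonal is only lower semicontinuous. The paper gets continuity because, outside a compact set of controls, the endpoint operator is independent of the belief and the control level.

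\emph{Assumption~\ref{ass:generator-growth}.} Your target $\mathcal B^a\le C(1+|\eta|)$ is right. The justification, however, is boundedness of $\gamma$ (from bounded effort responses) together with $|\ell|\le C(1+|z^0-z^1|)$ from Lemma~3(ii) of \cite{AEH2026}. The bounds on $H^0-H^1$ that you cite do not enter.

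\emph{The $r$-dependence.} The surplus coefficients are not $y$-independent: for diagonal controls $dR_s=\kappa R_s\,ds$. This term is linear in $r$ with a control-independent constant, so it only adds $Cr$. Your fallback through the Lipschitz bound of Assumption~\ref{ass:basic} would give a constant that is uniform only on compact control sets.
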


\begin{proof}
Write $d=z^0-z^1$.  By their equation (45), the surplus volatility is
$\eta=\pm d$.  Their effort responses $A^0,A^1$ and
$\overline\lambda=pA^0(z^0)+(1-p)A^1(z^1)$ are bounded, while $H^0,H^1$ are
Lipschitz by their Lemma 3(i).  At the upper endpoint, the extremal inequality
$H^0(z)-H^1(z)\le\overline a$ gives the signed estimate
\[
 H^0(z^0)-H^1(z^1)-\overline a
 \le H^1(z^0)-H^1(z^1)\le C|d|.
\]
Consequently, including the $r$-dependent term in the surplus drift,
$\mu_u^+\le C(|d|+r)$.
The inequality $H^0(z)-H^1(z)\ge\underline a$, decomposed at $z^1$ in the
same way, gives $\mu_d\le C(|d|+r)$ and hence the same estimate for
$\mu_d^+$.  This proves \eqref{eq:drift-cost}, and Lemma
\ref{lem:drift-equivalence} shows that it is the local lower bound
\eqref{eq:local-lower-bound} at each endpoint.

The estimate is one-sided.  At the upper endpoint, the expression in equation
(72) of Alvarez, Ekren and Huang \cite{AEH2026} is
\[
 -nH^0(z^0)+nH^1(z^1)+n\overline a
       +n\overline\lambda d=-n\mu_u\big|_{r=0},
\]
and the preceding calculation bounds it from below by $-Cn|d|$.  Thus the
lower bound in their penalized inequality is precisely an upper bound on
$\mu_u$.  The lower endpoint has the corresponding inequality with the
opposite extremum.

The state that remains on either endpoint is $p$, whose volatility is
\[
 \gamma(p,z^0,z^1)=p(1-p)\bigl(A^0(z^0)-A^1(z^1)\bigr).
\]
It is bounded.  Their Lemma 3(ii), applied to the reward in their equation
(47), gives $|\ell|\le C(1+|d|)$.  Hence, for bounded boundary jets,
\[
 \left|\frac12\gamma^2 M+\ell\right|\le C(1+|d|),
\]
which implies \eqref{eq:generator-growth}.

Consider now a sequence for which $r_n\to0$, $\eta_n\to0$, and
$\mu_n\to0$.  Then $d_n\to0$, and the drift formulas imply
\[
 H^0(z_n^0)-H^1(z_n^0)\longrightarrow\overline a
 \quad\hbox{or}\quad
 H^0(z_n^0)-H^1(z_n^0)\longrightarrow\underline a,
\]
according to the endpoint.  Lemma 3(iii) of Alvarez, Ekren and Huang
\cite{AEH2026} implies that the distance from $z_n^0$ to the corresponding
endpoint level set tends to zero.  Indeed, otherwise a subsequence bounded away
from that set would have a further subsequence contradicting their lemma.
Choose $\widetilde z_n$ in the endpoint level set with
$|z_n^0-\widetilde z_n|\to0$.  Then
$(\widetilde z_n,\widetilde z_n)$ belongs to $\mathcal A_u^0$ or
$\mathcal A_d^0$.

The Lipschitz continuity of $A^0,A^1$ shows that the volatility of $p$ under
$(z_n^0,z_n^1)$ converges to that under
$(\widetilde z_n,\widetilde z_n)$.  The same is true of the reward.  To see the
latter without assuming the controls are bounded, first consider a subsequence
on which $z_n^0+z_n^1$ is bounded; ordinary continuity applies.  On a subsequence
where its absolute value tends to infinity, $d_n\to0$ places
$z_n^0,z_n^1$, and $\widetilde z_n$ on the same affine tail of $H^0,H^1$.
The effort responses then equal the same endpoint $\alpha$ of the compact effort
set, and equation (47) reduces to
\[
 \ell(t,z^0,z^1,p)
 =\alpha-\frac{e^{\kappa(T-t)}}2
       \bigl(c(0,\alpha)+c(1,\alpha)\bigr),
\]
independently of $z^0,z^1$, and $p$.  Thus
$\mathcal B^{(z_n^0,z_n^1)}-\mathcal B^{(\widetilde z_n,\widetilde z_n)}\to0$,
and since $(\widetilde z_n,\widetilde z_n)$ is a boundary control,
\eqref{eq:sequential-generator-bound} follows once $H_0$ is continuous.  On each affine tail outside a compact set of controls, the same
calculation reduces the endpoint operator to an expression independent
of the belief and the control level. Continuity on the remaining
compact set proves continuity of the two boundary Hamiltonians,
which are finite.
\end{proof}

\begin{corollary}[Alvarez, Ekren and Huang \cite{AEH2026}]\label{cor:AEH}
Under Assumption 2 of Alvarez, Ekren and Huang \cite{AEH2026}, the upper and
lower limits of their value at each endpoint of the credible interval satisfy
the subsolution and supersolution properties in their Proposition 4, including
the terminal value zero.  If their equations (56)--(57) satisfy comparison, the
value is continuous at both endpoints and has the boundary values in their
Theorem 4.
\end{corollary}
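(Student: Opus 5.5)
The plan is to apply Theorem \ref{thm:boundary-continuity} separately at each endpoint of the credible interval. At the upper endpoint we use the surplus $R_u=\overline W(t)-Y$, and at the lower endpoint $R_d=Y-\underline W(t)$. The remaining state is the belief $p$, so the open state domain is $O=(0,1)$ and the last clause of the theorem applies. Since the credible set is the strip $\underline W(t)<y<\overline W(t)$, each surplus lives on a strip $0\le r\le\ell(t)$ with $\ell=\overline W-\underline W$. The first step is to check that $\ell$ is continuous and strictly positive for $t<T$, which holds because $\underline a<\overline a$ in their nondegenerate setting. The strip version of the theorem then applies at the face $r=0$ for each surplus.

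Next we verify the hypotheses one at a time.
\begin{enumerate}[label=(\roman*)]
\item Assumptions \ref{ass:drift-cost}, \ref{ass:generator-growth} and \ref{ass:generator-domination} are supplied by Lemma \ref{lem:AEH-verification}. That lemma also gives finiteness and continuity of the boundary Hamiltonians. Their $A_0$ sets are $\mathcal A_u^0$ and $\mathcal A_d^0$, and $H_0$ is identified with $\overline H$ and $\underline H$ from their (54), the infimum and supremum conventions matching after a sign change.
\item Assumption \ref{ass:basic} follows from their dynamic programming and well-posedness results under their Assumption 2.
\item Assumption \ref{ass:state-constraint-viscosity} should follow from their state-constrained viscosity characterization. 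The subsolution clause comes from their interior inequality holding up to the boundary; the concave surplus tests keep the supremum over $d=z^0-z^1$ attained on a bounded set, so a $\delta$-optimal control exists. The boundary supersolution clause for $a\in\mathcal A^0$ comes from constant controls $(z,z)$, which keep the endpoint invariant because they have zero surplus drift and volatility.
\item Local boundedness holds because their value is bounded: the rewards are bounded by $C(1+|d|)$, and the value lies between explicit bounds in their setting.
\item For Assumption \ref{ass:boundary-controls}, nonemptiness of $\mathcal A^0_{u}$ and $\mathcal A^0_{d}$ follows from attainment of the sup and inf of $H^0-H^1$ (their Lemma 3(iii)). Lower semicontinuity is trivial here because these sets do not depend on $(t,p)$.
\end{enumerate}

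Theorem \ref{thm:boundary-continuity} then shows that $\overline V$ and $\underline V$ at each endpoint are a subsolution and a supersolution of the endpoint equations, which are their Proposition 4 inequalities. For the terminal value zero we verify Assumption \ref{ass:terminal-limit} with $\Phi=0$ on the terminal face. We use the criterion \eqref{eq:terminal-state-estimate} and Lemma \ref{lem:terminal-state-limit}, which needs bounded rewards and the short-time estimate. Here $p$ has bounded coefficients, but the surplus volatility $d$ is unbounded, so boundedness cannot be assumed directly. Instead we exploit the strip: $R\in[0,\ell(t)]$ with $\ell(t)\to0$ as $t\to T$, since $\overline W(T)=\underline W(T)$. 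This gives $R_T=0$ automatically, and $\EE|p_T-p|\to0$ by bounded coefficients.

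The step I expect to be the main obstacle is controlling the reward $|\ell|\le C(1+|d|)$ near $T$ without a uniform bound on $d$. I would attack it with the quadratic penalty in $d$ coming from the state constraint: on a short horizon, large $|d|$ produces surplus variance that cannot be absorbed by a shrinking strip. Failing that, I would quote their terminal estimates directly.

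Finally, assuming comparison for their (56)--(57), the theorem gives \eqref{eq:boundary-envelope-equality} at each endpoint. The value is therefore continuous up to both faces, and its boundary values are the comparison solutions, i.e.\ those of their Theorem 4.
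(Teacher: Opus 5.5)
Your overall route is the paper's: the strip and open-domain ($O=(0,1)$) version of Theorem \ref{thm:boundary-continuity} applied separately to $R_u$ and $R_d$, Lemma \ref{lem:AEH-verification} for the three coefficient assumptions, and the constant sets $\mathcal A_u^0,\mathcal A_d^0$ for Assumption \ref{ass:boundary-controls}. The genuine gap is in the subsolution clause of Assumption \ref{ass:state-constraint-viscosity}. A $\delta$-optimal control for a finite supremum always exists, so that is not what needs proof. The real issue is that Theorem 3 of \cite{AEH2026} gives the subsolution inequality only with the lower semicontinuous envelope of their infimum Hamiltonian. You can pass to a pointwise control only if that Hamiltonian is continuous at the concave tests \eqref{eq:concave-surplus-tests}. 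Your bound on $d=z^0-z^1$ from the $\beta d^2$ term does not give this. The common level of $z^0,z^1$ is still unbounded, and an infimum of continuous functions over a noncompact set is in general only upper semicontinuous. The missing ingredient is the affine-tail structure. For bounded $d$ and a large common level, both controls lie on the same affine tail of $H^0,H^1$. There $\gamma=0$, and the reward and surplus drift do not depend on the level. Hence the infimum can be taken over a fixed compact set, uniformly for test derivatives near the given ones, and this gives continuity.

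Two further steps need repair. First, in the boundary supersolution clause, the control $(z,z)$ has zero surplus drift and volatility only at $r=0$, while $V_*(\cdot,\cdot,0)$ is a liminf from $r>0$. You must run the diagonal control from interior points near the contact, where $dR_s=\kappa R_s\,ds$ keeps the surplus positive and small. Stop on leaving a neighborhood that stays away from the other endpoint, and concatenate with a feasible continuation; only then pass to the boundary and let $h\downarrow0$. Second, you identify the endpoint limits only with the comparison solutions of (56)--(57). The paper also identifies these solutions with the endpoint control values, via the open-domain version of Theorem \ref{prop:perron}. That step uses:
\begin{itemize}
\item constant selections, which verify Assumption \ref{ass:selection};
\item a fixed finite endpoint control that is admissible from every belief;
\item Perron sub- and supersolutions with $k=0$, coming from the bounded boundary reward;
\item local boundedness of $H_0$ up to $T$;
\item the affine-tail reduction, which shows that bounded controls suffice.
\end{itemize}
This identification is needed if the boundary values in their Theorem 4 are the endpoint control values rather than abstract solutions.

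For the terminal limit, your variance idea does close without citing their bounds. Since $0\le R\le\overline W-\underline W$, It\^{o}'s formula for $R^2$ together with $|\mu|\le C(1+|d|)$ gives $\EE\int_t^T|d_s|^2\,ds\to0$ as $t\to T$. The running reward is bounded by $C(1+|d|)$, so its expected integral vanishes in the limit. The paper instead quotes their a priori bounds (51) and the collapse of the credible interval.
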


\begin{proof}
Apply the open-state-domain version of Theorem
\ref{thm:boundary-continuity} with $O=(0,1)$ for the belief
state, to $R_u$ and $R_d$ separately.  The other endpoint is a second face of the state space and
does not affect the local argument for $t<T$.  Lemma
\ref{lem:AEH-verification} verifies Assumptions \ref{ass:drift-cost},
\ref{ass:generator-growth}, and \ref{ass:generator-domination}.  Each boundary control set is constant,
nonempty, and contains finite controls; its belief dynamics have bounded
Lipschitz diffusion coefficients.  This verifies Assumption
\ref{ass:boundary-controls}.  Constant selections verify Assumption
\ref{ass:selection}.  A fixed finite endpoint control is admissible from
every initial belief in $(0,1)$ and supplies a uniform control bound.
The boundary reward is bounded, so the stochastic sub- and supersolutions
described after Theorem \ref{prop:perron} exist with $k=0$; the boundary
Hamiltonians are locally bounded for bounded jets up to $T$.  The affine-tail
calculation in Lemma \ref{lem:AEH-verification} also allows every boundary
control to be replaced by one in a fixed compact subset without changing
the belief dynamics or the reward.  Thus bounded controls suffice for the
endpoint values.  Under comparison, the open-state-domain version of Theorem
\ref{prop:perron} identifies each value with the solution of the corresponding
equation in (56)--(57).

Theorem 3 of Alvarez, Ekren and Huang \cite{AEH2026} gives the subsolution
inequality with the lower semicontinuous envelope of their Hamiltonian.
At a test \eqref{eq:concave-surplus-tests}, write $d=z^0-z^1$.
In their infimum convention, the expression being minimized is bounded below,
locally in the state and in the full test derivatives, by $c d^2-C(1+|d|)$
for some $c>0$.
Indeed, the $rr$ derivative contributes $\beta d^2$, the mixed term grows at
most linearly in $d$, and the drift and reward satisfy Lemma 3(ii) of
Alvarez, Ekren and Huang \cite{AEH2026}.  Near-minimizers therefore have
bounded $d$.  For bounded $d$, sufficiently large common values of $z^0,z^1$
place both controls on the same affine tail.  There the expression is
independent of their common level: $\gamma=0$ and the reward and surplus
drift do not depend on that level.  The infimum can consequently be taken over
a compact set, uniformly in a neighborhood of the test derivatives.
Continuity of the coefficients makes the Hamiltonian continuous there.
Their subsolution inequality thus gives the pointwise control selection in
Assumption \ref{ass:state-constraint-viscosity}.

The interior supersolution inequalities follow control by control from the
lower dynamic programming inequality in Proposition 3 of Alvarez, Ekren and
Huang \cite{AEH2026}.  We next verify the boundary clause.  Fix one endpoint and a boundary control
$(z,z)$ in the corresponding set $\mathcal A_u^0$ or $\mathcal A_d^0$.  If
$\psi$ touches the lower envelope from below at a boundary point, take interior
points converging to that contact and apply the lower dynamic programming
inequality in Proposition 3 of Alvarez, Ekren and Huang \cite{AEH2026} over a
time interval of length $h$, stopped on leaving the contact neighborhood, using
the control $(z,z)$.  For this diagonal control the surplus solves $dR_s=\kappa R_s\,ds$.
Choose $h<T-t_0$ and a neighborhood whose closure stays away from the other
endpoint.  From sufficiently small positive initial surplus, this control is
admissible until the local stopping time; there it is concatenated with a
feasible continuation.  The other endpoint causes no obstruction on this
stopped interval.  Let $\tau_h$ be $t_0+h$ stopped on leaving this
neighborhood.  Localization and passage to the boundary give
\[
 0\ge
 \EE\left[\int_{t_0}^{\tau_h}
 \left\{\partial_t\psi(s,p_s,0)
 +\mathcal B^{z,z}
   (s,p_s,0;D_p\psi,D^2_{pp}\psi)\right\}\,ds\right].
\]
Since $\EE[\tau_h-t_0]/h\to1$, division by $h$ and $h\downarrow0$ give
\[
 -\partial_t\psi(t_0,p_0,0)
 -\mathcal B^{z,z}(t_0,p_0,0;D_p\psi,D^2_{pp}\psi)\ge0.
\]
Since this holds for every boundary control, it is the boundary supersolution
inequality in Assumption \ref{ass:state-constraint-viscosity}.  The a priori value bounds in Proposition 2, equation (51), of
Alvarez, Ekren and Huang \cite{AEH2026}, together with the collapse of the
credible interval at $T$, give the terminal envelope condition.  The resulting
boundary Hamiltonians are their operators in equation (54).  Theorem
\ref{thm:boundary-continuity} therefore gives their Proposition 4, and
comparison gives the boundary assertion of their Theorem 4.
\end{proof}

\section{Verification from the Target Model}
\label{sec:original-variables}

This section gives a verification procedure from a stochastic target
model written in the variables $(X,Y)$. Suppose
\begin{align*}
 dX_s &= b_X(s,X_s,Y_s,a_s)\,ds+
        \sigma_X(s,X_s,Y_s,a_s)\,dW_s,\\
 dY_s &= b_Y(s,X_s,Y_s,a_s)\,ds+
        \sigma_Y(s,X_s,Y_s,a_s)\,dW_s,
\end{align*}
and let $w$ be the target value in \eqref{eq:w}. The first model-specific
step is to prove that the viable set is the closed epigraph
$y\ge w(t,x)$, including attainment of the target at $y=w(t,x)$.
One then verifies the controlled-process and viscosity hypotheses
as well as the coefficient conditions.

\paragraph{Smooth verification checklist}
For $w\in C^{1,2}$, the main theorem is verified as follows:
\begin{enumerate}[label=(\roman*)]
\item compute the surplus dynamics $R=Y-w(t,X)$, establish well-posedness and
dynamic programming in the chosen control formulation, and check local
boundedness of the value up to the boundary;
\item establish Assumption \ref{ass:state-constraint-viscosity}: justify selection
of a control at the concave tests \eqref{eq:concave-surplus-tests}, and verify
the lower dynamic programming inequality for admissible continuations of each
boundary control;
\item identify $A_0(t,x)$ by setting both the drift and volatility of $R$ equal
to zero at $R=0$;
\item verify the continuity of $A_0$ and Assumptions \ref{ass:drift-cost},
\ref{ass:generator-growth}, and \ref{ass:generator-domination}; for compact or
coercive controls the local lower bound \eqref{eq:bei-local-bound} suffices by
Proposition \ref{prop:compact-coercive}, and otherwise Lemma
\ref{prop:coefficient-set-test} gives a coefficient-set test;
\item form $H_0$ from the coefficients of $X$ and the reward on $A_0$, prove
comparison for the boundary equation \eqref{eq:boundary-HJB}, and identify
$\cV$ with its solution, for instance by Theorem \ref{prop:perron};
\item verify the terminal limit \eqref{eq:terminal-envelope-condition}, for
example through the state estimate \eqref{eq:terminal-state-estimate}.
\end{enumerate}

With $N^a$ and $M^a$ as in Section \ref{sec:BEI}, evaluated at
$(t,x,w(t,x),a)$, It$\hat{o}$'s formula shows that the volatility and drift of
$R=Y-w(t,X)$ on the boundary are $N^a$ and $M^a$.  Viability requires
$N^a=0$ and $M^a\ge0$, and the controls that keep the process on the boundary
form the set $A_0(t,x):=\{a:N^a(t,x)=M^a(t,x)=0\}$,
and the boundary coefficients are
$(b_X,\sigma_X,f)(t,x,w(t,x),a)$ for $a\in A_0(t,x)$.  If a control with
$N^a=0$ had $M^a>0$, the expression
$-nM^a+n^2|N^a|^2$ would tend to $-\infty$.  The local lower bound therefore
excludes such a strictly inward drift.  When
$w$ is a classical solution of the geometric dynamic programming equation of
Soner and Touzi \cite{SonerTouzi2002STP} with
$\sup\{M^a:N^a=0\}=0$, the drift is nonpositive at zero volatility.
The quantitative estimate needed for the boundary limit is
$(M^a)^+\le C(|N^a|+|N^a|^2)$, uniformly over controls on compact sets of
$(t,x)$, as in Lemma \ref{lem:drift-equivalence}.  It controls how positive
drift approaches zero as the volatility vanishes.  Section
\ref{sec:failure-lower-bound} shows that a smooth target value and the
pointwise condition at zero volatility can hold while this estimate fails.

Lower semicontinuity of $A_0$, or the selections of Assumption
\ref{ass:selection}, follow in concrete models from an explicit parametrization
of the solutions of $N=M=0$ or from an implicit function argument.  The
remaining coefficients give
\begin{equation}\label{eq:original-boundary-Hamiltonian}
 H_0(t,x,q,M)=\sup_{a\in A_0(t,x)}
 \left\{b_X\cdot q+\frac12\Tr(\sigma_X\sigma_X^\top M)+f\right\},
\end{equation}
with all coefficients evaluated at $(t,x,w(t,x),a)$.

\paragraph{Checking a proposed boundary value}
After computing $A_0$ and \eqref{eq:original-boundary-Hamiltonian}, a proposed
Dirichlet datum $g$ is checked as a viscosity solution of
$-\partial_t g-H_0(t,x,Dg,D^2g)=0$ with $g(T,\cdot)=\Phi(\cdot,0)$.  If $g$ lies
in the comparison class, Theorem \ref{thm:boundary-continuity} identifies it
with the limit of $V$; under the additional hypotheses of Theorem
\ref{prop:perron}, $g=\cV$.  For nonsmooth $w$, the change of variables based on It$\hat{o}$'s formula is
unavailable; the surplus dynamics and the state-constraint viscosity properties
must then be established directly before the theorem applies.

\section{Examples}\label{sec:examples}

The first three models verify Theorem \ref{thm:boundary-continuity} with a
continuum of zero-surplus-volatility controls generating distinct state drifts.
They therefore violate the singleton assumption in the smooth Dirichlet
reduction of Bouchard, Elie and Imbert \cite[Assumption 3.3(ii)]{BEI2010}.
Section \ref{sec:noncompact-controls} also violates their unbounded-control
condition \cite[(3.11)]{BEI2010}; a quadratic cost bounds the generators instead.
The degenerate state diffusions lie outside the uniform ellipticity assumption
of Bouchard, Gnecco Heredia, Moreau and Pham \cite[Theorem 2.13]{BGHMP2026}.
Section \ref{sec:noncompact-controls} adds an unbounded state control, and
Proposition \ref{prop:redundant-hedging} allows positive surplus drift.
These controlled drift and hedging problems extend the applications beyond
the principal-agent endpoint model of Alvarez, Ekren and Huang
\cite[Section 6]{AEH2026}.  Section \ref{sec:failure-lower-bound} shows that
boundary identification can fail if the local lower bound is omitted.

\subsection{A compact set-valued boundary-control example}
\label{sec:compact-controls}

Let $a\in[-1,1]$, $a^+=\max\{a,0\}$, and consider
\begin{gather*}
 dX_s=a_s\,ds,\qquad dR_s=a_s^+\,dW_s,\qquad R_s\ge0,
 \\
 V(t,x,r)=\sup_a\EE\left[\int_t^T a_s^+\,ds+G(X_T)\right],
\end{gather*}
where $G$ is bounded and uniformly continuous.  For the same dynamics with
$Y$ in place of $R$, the target $Y_T\ge0$ has value $w=0$: $a=0$ is feasible
from $y\ge0$, and the martingale property excludes $y<0$.

Here $\mu=0$ and $A_0=[-1,0]$, so the local lower bound and Proposition
\ref{prop:compact-coercive} apply.  Weak dynamic programming
\cite{BouchardNutz2012} and the continuous-Hamiltonian subsolution argument
of \cite[Theorem 3.1(2)]{BEI2010} give the upper viscosity inequality.
Each constant $a\in A_0$ keeps positive surplus constant, so lower dynamic
programming and It$\hat{o}$'s formula give the boundary supersolution inequality.
The bounds $|X_T-x|\le T-t$ and $\EE[R_T]=r$ verify the terminal limit.

The boundary equation
\[
 -\partial_t v-\sup_{a\in[-1,0]}aD_xv=0,
 \qquad v(T,x)=G(x),
\]
has bounded-solution comparison.  Constant selections, bounded coefficients,
and stochastic sub- and supersolutions $\mp\|G\|_\infty$ (using $a=0$ for
the subsolution) verify Theorem
\ref{prop:perron} with $k=0$.  Theorem \ref{thm:boundary-continuity} therefore
gives $V(t,x,r)\to\cV(t,x)$ locally uniformly as $r\downarrow0$,
with optimization over the whole interval $A_0$ retained in the boundary HJB.

\subsection{A noncompact-control example}
\label{sec:noncompact-controls}

Let $A=[-1,1]\times\RR$, write $a=(u,v)$, and consider
\[
 dX_s=(\tanh u_s+v_s)\,ds,\qquad dY_s=u_s\,dW_s,
 \qquad Y_T\ge0\quad\hbox{a.s.}
\]
Controls are progressively measurable and satisfy
$\EE\int_t^T|v_s|^2ds<\infty$.  The target value is $w=0$: a nonnegative
initial resource is feasible with $u=0$, while the martingale property of
$Y$ excludes a negative initial resource.  Set $R=Y$.  For a bounded uniformly
continuous function $G$, define
\begin{equation}\label{eq:noncompact-value}
 V(t,x,r)=\sup_{(u,v)}\EE\left[
 G(X_T)-\int_t^T|v_s|^2ds\right],\qquad R_s\ge0.
\end{equation}
The control $u=v=0$ is admissible, so $|V|\le\|G\|_\infty$.

Here
\begin{equation*}
 \mu^{u,v}=0,\qquad\eta^{u,v}=u,\qquad
 \mathcal B^{u,v}(q,M)=(\tanh u+v)q-v^2,\qquad
 A_0=\{0\}\times\RR.
\end{equation*}
For $|q|+|M|\le Q$,
\[
 (\mu^{u,v})^+=0,\qquad
 \mathcal B^{u,v}(q,M)\le |q|+\frac{q^2}{4}
                       \le Q+\frac{Q^2}{4}.
\]
Thus Assumptions \ref{ass:drift-cost} and \ref{ass:generator-growth} hold, and
\begin{equation*}
 H_0(q,M)=\sup_{v\in\RR}\{vq-v^2\}=\frac{q^2}{4}
\end{equation*}
is finite and continuous.  If $\eta^{u_n,v_n}\to0$ and $q_n\to q$, then
\[
 \mathcal B^{u_n,v_n}(q_n,M_n)
 =\tanh(u_n)q_n+\mathcal B^{0,v_n}(q_n,M_n)
 \le\tanh(u_n)q_n+H_0(q_n,M_n),
\]
and the right side tends to $H_0(q,M)$, which is
\eqref{eq:sequential-generator-bound}.
The correspondence $A_0$ is constant and its dynamics are well posed.
The full interior Hamiltonian is also finite and continuous:
\[
 H(q_x,q_r,N)=\frac{q_x^2}{4}
       +\sup_{u\in[-1,1]}\left\{\tanh(u)q_x+\frac12u^2N_{rr}\right\}.
\]
Weak dynamic programming \cite{BouchardNutz2012} and the constrained
subsolution argument of \cite[Theorem 3.1(2)]{BEI2010} apply; finiteness and
continuity of $H$ give the pointwise selection in Assumption
\ref{ass:state-constraint-viscosity}, and the boundary supersolution clause
follows as in Section \ref{sec:compact-controls}, since every $(0,v)$
keeps a positive surplus constant.

For the terminal limit, an $\epsilon$-optimal control in
\eqref{eq:noncompact-value} has $\EE\int_t^T|v_s|^2ds\le2\|G\|_\infty+\epsilon$,
hence $\EE|X_T-x|\le(T-t)+\sqrt{(T-t)(2\|G\|_\infty+\epsilon)}$ uniformly in
$r$; uniform continuity of $G$ gives the upper terminal inequality and
$u=v=0$ the lower one.

The boundary equation is
\begin{equation*}
 -\partial_t z-\frac14|D_xz|^2=0,\qquad z(T,x)=G(x),
\end{equation*}
for which the standard comparison principle applies to bounded semicontinuous
sub- and supersolutions.  Theorem \ref{prop:perron} applies with $k=0$:
$A$ is closed and $A_0$ is constant, so Assumption \ref{ass:selection} holds
with constant selections and the dynamics $d\bar X_s=v\,ds$; the reward
satisfies $|f(0,v)|=v^2\le L^2$ for $|v|\le L$, although it is unbounded on
$A_0$; and $G$ and $\sup_{A_0}f=0$ are bounded above, so a constant is a
stochastic supersolution.  The constant $-\|G\|_\infty$ is a stochastic
subsolution using $(u,v)=(0,0)$, and $H_0$ is bounded on bounded jets up to
$T$.  Thus the bounded-control boundary value is the unique bounded
viscosity solution.  It agrees with the value over all square-integrable
velocity controls: pathwise Cauchy--Schwarz bounds the displacement cost
below by $|X_T-x|^2/(T-t)$, and every fixed terminal displacement is attained
by a constant velocity.  Consequently,
\begin{equation}\label{eq:noncompact-boundary-value}
 \cV(t,x)=\sup_{y\in\RR}
       \left\{G(y)-\frac{|y-x|^2}{T-t}\right\},\qquad t<T.
\end{equation}
Equation \eqref{eq:noncompact-boundary-value} identifies $\cV$ with that
viscosity solution.  Theorem \ref{thm:boundary-continuity} gives locally
uniform convergence of $V(t,x,r)$ to $\cV(t,x)$ as $r\downarrow0$.

Assumption \ref{ass:coercive-controls} fails along $(u_n,v_n)=(0,n)$, where
$|a_n|\to\infty$ but $\eta^{a_n}=0$; the quadratic cost supplies
\eqref{eq:generator-growth} instead.

\subsection{Redundant hedging instruments}

A nonconstant target boundary also arises from redundant hedging instruments.
Consider two trading channels for the same discounted risk exposure, whose
price carries an excess return.  Their aggregate position determines wealth,
while their allocation changes a second state that records cumulative
imbalance between the channels.  The excess return makes positive surplus
drift available at a volatility cost, so this is the example in which the
drift condition is active.

\begin{proposition}[Redundant hedging instruments with controlled allocation]
\label{prop:redundant-hedging}
Let $P_s^{t,p}=p+\vartheta(s-t)+W_s-W_t$ for a constant excess return
$\vartheta\in\RR$, and let $\Psi\in C_b^3(\RR)$ satisfy $\Psi'\ge0$ and
$\Psi'\not\equiv0$.  Set
\begin{equation}\label{eq:redundant-target-value}
       w(t,p)=\EE\bigl[\Psi(p+W_T-W_t)\bigr],
\end{equation}
which does not depend on $\vartheta$, and choose $M>\|\Psi'\|_\infty$.  A control
$\pi=(\pi^1,\pi^2)$ takes values in $A=[-M,M]^2$ and generates discounted
wealth and an allocation state according to
\begin{equation*}
       dY_s=(\pi_s^1+\pi_s^2)\,dP_s,
       \qquad
       dQ_s=(\pi_s^1-\pi_s^2)\,ds.
\end{equation*}
Then $w(t,p)$ is the minimal initial wealth for the target
$Y_T\ge\Psi(P_T)$, independently of the initial allocation state $Q_t=q$.

Fix $c_1,c_2>0$, a bounded uniformly continuous function $k$, and bounded
uniformly continuous functions $G,\zeta$, with $\zeta(0)=0$.  For
$R_s=Y_s-w(s,P_s)$ and $R_t=r$, consider
\begin{equation*}
 V(t,p,q,r)=\sup_{\pi}\EE\left[
   -\int_t^T\bigl(c_1|\pi_s^1|^2+c_2|\pi_s^2|^2+k(Q_s)\bigr)\,ds
   +G(P_T,Q_T)+\zeta(R_T)\right],
\end{equation*}
where the supremum is over progressively measurable controls in $A$ satisfying
$R_s\ge0$ for $s\in[t,T]$.  The controls that keep the process on $R=0$ form
the line segment
\begin{equation}\label{eq:redundant-A0}
 A_0(t,p,q)=
 \left\{\pi\in[-M,M]^2:
          \pi^1+\pi^2=D_pw(t,p)\right\}.
\end{equation}
The corresponding boundary control problem is
\begin{equation}\label{eq:redundant-boundary-value}
 \cV(t,p,q)=
 \sup_{\substack{\pi_s\in A_0(s,P_s,Q_s)}}
 \EE\left[
   -\int_t^T\bigl(c_1|\pi_s^1|^2+c_2|\pi_s^2|^2+k(Q_s)\bigr)\,ds
   +G(P_T,Q_T)\right],
\end{equation}
with boundary dynamics
\[
       dP_s=\vartheta\,ds+dW_s,\qquad dQ_s=(\pi_s^1-\pi_s^2)\,ds.
\]
Then, locally uniformly in $(t,p,q)$,
\begin{equation}\label{eq:redundant-boundary-limit}
       \lim_{r\downarrow0}V(t,p,q,r)=\cV(t,p,q).
\end{equation}
The target boundary $w$ is nonconstant, and controls in
$A_0(t,p,q)$ generate different drifts of $Q$.  For $\vartheta\ne0$ the
surplus has drift $\vartheta\eta$, so departures from the replicating hedge
can earn positive surplus drift at the price of surplus volatility, and the drift condition
\eqref{eq:drift-cost} is nontrivial.  For example, with
$\Psi(p)=(1+\tanh p)/2$ and $G(p,q)=\tanh q$, both the target boundary and the
endogenous Dirichlet datum are nonconstant.
\end{proposition}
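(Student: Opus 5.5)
The proof follows the smooth verification checklist of Section~\ref{sec:original-variables}, applied to the state $X=(P,Q)$, which has $d=2$.

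\emph{Step 1: the target problem.} Since $\Psi\in C_b^3$, the function $w$ solves the heat equation $\partial_t w+\frac12 D_{pp}w=0$ with $w(T,\cdot)=\Psi$. It is $C^{1,2}$ with bounded derivatives, and $0\le D_pw\le\|\Psi'\|_\infty<M$. For the minimal-wealth claim, pass to the measure $\mathbb Q$ under which $P$ is a Brownian motion (Girsanov with the constant $\vartheta$). Under $\mathbb Q$, $Y$ is a bounded-integrand martingale. Any feasible $y$ therefore satisfies $y\ge\EE^{\mathbb Q}\Psi(P_T)=w(t,p)$. Conversely, $\pi^1=\pi^2=\frac12D_pw(s,P_s)$ lies in $A$ and replicates $\Psi(P_T)$ from $w(t,p)$ by It\^o's formula. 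None of this involves $q$.

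\emph{Step 2: surplus coefficients.} Write $\Sigma=\pi^1+\pi^2$. It\^o's formula together with the heat equation gives
\[
dR=(\Sigma-D_pw)\,dP=\vartheta(\Sigma-D_pw)\,ds+(\Sigma-D_pw)\,dW .
\]
Hence $\eta=\Sigma-D_pw$ and $\mu=\vartheta\eta$, both independent of $r$. Setting both to zero yields the segment \eqref{eq:redundant-A0}. Because $|D_pw|<M$, this segment is a nondegenerate chord of the square, and its endpoints move continuously in $(t,p)$. This gives nonemptiness and Hausdorff continuity of $A_0$, hence Assumption~\ref{ass:boundary-controls}. Since $A$ is compact and $\mu^+\le|\vartheta||\eta|$, Proposition~\ref{prop:compact-coercive} delivers Assumptions~\ref{ass:drift-cost}, \ref{ass:generator-growth} and \ref{ass:generator-domination}. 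The generator $\mathcal B^\pi$ uses drift $(\vartheta,\pi^1-\pi^2)$, diffusion $(1,0)$ and reward $-c_1|\pi^1|^2-c_2|\pi^2|^2-k(q)$. Continuity of $H_0$ follows from Berge's theorem.

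\emph{Step 3: viscosity, terminal and comparison hypotheses.} For Assumption~\ref{ass:state-constraint-viscosity}, the interior Hamiltonian is a supremum of continuous functions over compact $A$. The subsolution clause then follows, as in Section~\ref{sec:compact-controls}, from weak dynamic programming \cite{BouchardNutz2012} and \cite[Theorem~3.1(2)]{BEI2010}. For the boundary supersolution clause, fix $a\in A_0(t_0,p_0)$. Take the continuous feedback $\bar\pi(s,p)=a+\frac12(D_pw(s,p)-D_pw(t_0,p_0))(1,1)$; it stays in $A_0$ near the base point because the chord is nondegenerate. Under this feedback, a positive surplus is constant up to exit from a neighborhood. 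Lower dynamic programming with this control, followed by It\^o's formula, then gives the inequality. Local boundedness holds because all rewards are bounded and the controls are bounded.

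The terminal limit follows from \eqref{eq:terminal-state-estimate}. Here one needs $\EE R_T\to0$. This holds because $R$ has bounded coefficients, so $\EE R_T\le r+C(T-t)+C\sqrt{T-t}$; together with $\zeta(0)=0$ and uniform continuity this suffices. The boundary equation is
\[
-\partial_tv-\vartheta v_p-\tfrac12v_{pp}-\sup_{A_0}\{(\pi^1-\pi^2)v_q-c_1|\pi^1|^2-c_2|\pi^2|^2\}+k=0 .
\]
Its Hamiltonian is Lipschitz in $(q$-gradient$)$, locally uniformly. The set $A_0$ depends on $(t,p)$ only through $D_pw$, which is Lipschitz, and the controls are bounded. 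Comparison for bounded semicontinuous solutions is therefore standard. Theorem~\ref{prop:perron} with $k=0$ and constant bounds $\pm(\|G\|_\infty+T(\|k\|_\infty+C))$ identifies $\cV$, using the selections built above. These selections are Lipschitz in $p$ because $D_{pp}w$ is bounded, so strong existence holds.

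\emph{Main obstacle.} I expect the boundary supersolution clause and the selection argument to be the hardest part. The difficulty is that $A_0$ moves with $p$, so a constant control does not keep $R=0$ invariant. The Lipschitz translated feedback handles this, provided its validity near corners of the square is checked. The strict inequality $M>\|\Psi'\|_\infty$ supplies the margin for that check.

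Finally, the example claims are direct. With $\Psi=(1+\tanh)/2$, $\Psi'>0$, so $w$ is strictly increasing and hence nonconstant. With $G=\tanh q$, $\cV$ depends on $q$ at $t$ near $T$, by the terminal limit.
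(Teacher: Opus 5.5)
Your overall route is the paper's: the heat equation and Girsanov for $w$, the surplus coefficients $\eta=\pi^1+\pi^2-D_pw$ and $\mu=\vartheta\eta$, Proposition~\ref{prop:compact-coercive}, weak dynamic programming for the subsolution clause, the terminal estimate, and Theorem~\ref{prop:perron}. Two of your choices differ harmlessly from the paper: the replicating control $(\frac12D_pw,\frac12D_pw)$, and the Hausdorff--Lipschitz argument for comparison.

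\emph{The gap: the translated feedback fails at the endpoints of the chord.} This is the step you flagged yourself. The feedback $\bar\pi(s,p)=a+\frac12\bigl(D_pw(s,p)-D_pw(t_0,p_0)\bigr)(1,1)$ does not stay in $A=[-M,M]^2$ when $a$ is an endpoint of the chord. With $z_0=D_pw(t_0,p_0)\in[0,M)$, the endpoints are $(M,z_0-M)$ and $(z_0-M,M)$. For $a=(M,z_0-M)$ you get $\bar\pi^1(s,p)=M+\frac12\bigl(D_pw(s,p)-z_0\bigr)>M$ wherever $D_pw(s,p)>z_0$. Such points occur arbitrarily close to $(t_0,p_0)$ unless that point is a local maximum of $D_pw$, and $P$ reaches them with positive probability.

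The margin $M>\|\Psi'\|_\infty$ does not help. It only makes the chord nondegenerate; the endpoints always have a coordinate exactly equal to $\pm M$. Both the boundary clause of Assumption~\ref{ass:state-constraint-viscosity} and Assumption~\ref{ass:selection} quantify over every $a\in A_0$, so neither is verified at these two points. Your feedback is also only local, whereas Assumption~\ref{ass:selection} additionally asks for an admissible boundary control from every initial state. Your replicating control would supply that, but you do not invoke it.

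\emph{The fix.} The paper rescales instead of translating. With $z=D_pw(s,p)$, the chord is exactly $\{((z+v)/2,(z-v)/2):|v|\le 2M-|z|\}$. Fix $\theta\in[-1,1]$ and set $v=(2M-|D_pw(s,p)|)\theta$ and $\pi^{1,2}=(D_pw(s,p)\pm v)/2$. This feedback:
\begin{enumerate}[label=(\roman*)]
\item lies in $A_0(s,p,q)$ for all $(s,p)$;
\item reaches the endpoints at $\theta=\pm1$;
\item is Lipschitz in $p$.
\end{enumerate}
This single device gives the boundary feedbacks, the selections through every point, global admissible boundary controls, and the fixed compact parametrization used for comparison.

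For the viscosity inequality alone there is another repair. Translations do work at relative-interior points of the chord, since both coordinates there lie in $(-M,M)$. You could then pass to the endpoints by continuity of $a\mapsto\mathcal B^a$.

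\emph{A smaller slip.} Constants are not stochastic sub- or supersolutions here, because $f=-c_1|\pi^1|^2-c_2|\pi^2|^2-k(q)$ has no sign. A constant $v$ would require $\EE[J^a_{\tau,\rho}\mid\mathcal F_\tau]\ge0$ (respectively $\le0$). Use $\pm\bigl(C+C'(T-t)\bigr)$ with $C\ge\|G\|_\infty$ and $C'\ge\sup|f|$, as the paper does.
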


\begin{proof}
The function $w$ belongs to $C_b^{1,2}$ and solves
\begin{equation}\label{eq:redundant-heat}
       \partial_tw+\frac12D^2_{pp}w=0,
       \qquad w(T,\cdot)=\Psi.
\end{equation}
Moreover,
\[
       0\le D_pw(t,p)
       =\EE\bigl[\Psi'(p+W_T-W_t)\bigr]
       \le\|\Psi'\|_\infty<M.
\]
The control $\pi^1_s=D_pw(s,P_s)$, $\pi^2_s=0$ is admissible and keeps
$Y_s-w(s,P_s)$ constant, so it attains the target from every $y\ge w(t,p)$;
conversely, under the equivalent measure $\mathbb Q$ for which $P$ is a
Brownian motion, $Y$ is a bounded-volatility martingale, so $Y_T\ge\Psi(P_T)$
forces $y=\EE^{\mathbb Q}[Y_T]\ge\EE^{\mathbb Q}[\Psi(P_T)]=w(t,p)$.  Thus \eqref{eq:redundant-target-value} is the target
value and the viable set is $\{y\ge w(t,p)\}$.

For an arbitrary control, \eqref{eq:redundant-heat} and It$\hat{o}$'s formula give
\begin{equation*}
       dR_s=
       \bigl(\pi_s^1+\pi_s^2-D_pw(s,P_s)\bigr)\,(\vartheta\,ds+dW_s).
\end{equation*}
For the state $X=(P,Q)$ the surplus coefficients are therefore
$\eta=\pi^1+\pi^2-D_pw(t,p)$ and $\mu=\vartheta\eta$, which proves
\eqref{eq:redundant-A0}.  Since $|D_pw|<M$, this set is a
nondegenerate segment: with $z=D_pw(t,p)$ and $v=\pi^1-\pi^2$, which
ranges over $[-2M+|z|,2M-|z|]$, one has $\pi^1=(z+v)/2$ and
$\pi^2=(z-v)/2$, so the slices vary continuously in the Hausdorff metric and
Assumption
\ref{ass:boundary-controls} holds.  Assumption \ref{ass:drift-cost} holds with $C=|\vartheta|$, since
$\mu^+\le|\vartheta||\eta|$; by Lemma \ref{lem:drift-equivalence} this is the
local lower bound \eqref{eq:local-lower-bound}.  For sufficiently large
$\lambda$, the infimum of $-\lambda\mu+\lambda^2|\eta|^2$ equals
$-\vartheta^2/4$, attained at the feasible value $\eta=\vartheta/(2\lambda)$.

The coefficients and rewards are bounded and continuous and the control set
is compact.  The admissible families are stable under conditioning and
concatenation, and weak dynamic programming applies as in Bouchard and Nutz
\cite{BouchardNutz2012}.  The constrained subsolution argument of Bouchard,
Elie and Imbert \cite[Theorem 3.1(2)]{BEI2010} gives the upper inequality
in Assumption \ref{ass:state-constraint-viscosity}; the compact-control
Hamiltonian is continuous.  For the boundary supersolution assertion, fix a boundary control at
$(t_0,p_0,q_0)$ with parameter $\theta=(\pi^1-\pi^2)/(2M-|D_pw(t_0,p_0)|)$ and
use the feedback $v_s=(2M-|D_pw(s,P_s)|)\theta$,
$\pi^{1,2}_s=(D_pw(s,P_s)\pm v_s)/2$, which keeps $R$ constant from every
positive surplus and has well-posed dynamics; lower dynamic programming and
It$\hat{o}$'s formula on a shrinking stopped interval give the inequality for that
control.  Under every admissible control $R_T=r+\int_t^T\eta_s(\vartheta\,ds+dW_s)$
with $|\eta|\le3M$, and the drifts of $P,Q$ are bounded, so
$\EE[|P_T-p|+|Q_T-q|+R_T]\le C\sqrt{T-t}+C(T-t)+r$ and Assumption
\ref{ass:terminal-limit} holds.

The boundary value \eqref{eq:redundant-boundary-value} is the bounded
viscosity solution of
\begin{align}
 &-\partial_t v-\vartheta D_pv-\frac12D^2_{pp}v \nonumber\\
 &\qquad-\sup_{\pi\in A_0(t,p,q)}
 \bigl\{(\pi^1-\pi^2)D_qv
       -c_1|\pi^1|^2-c_2|\pi^2|^2-k(q)\bigr\}=0,
 \label{eq:redundant-boundary-HJB}\\
 &v(T,p,q)=G(p,q). \nonumber
\end{align}
To verify comparison, put $z=D_pw(t,p)$ and parameterize the controls by
a fixed parameter $\theta\in[-1,1]$:
\[
 v=(2M-|z|)\theta,\qquad
 \pi^1=(z+v)/2,\qquad \pi^2=(z-v)/2.
\]
Since $D_pw$ is bounded and Lipschitz in $p$, the drift of $Q$ is uniformly
Lipschitz in $(p,q)$, the drift and diffusion of $P$ are constant, and the running reward
is bounded and uniformly continuous.  The fixed compact control representation
therefore gives the usual bounded-solution comparison principle for
\eqref{eq:redundant-boundary-HJB}, and the same parametrization supplies the
selections of Assumption \ref{ass:selection}, so Theorem
\ref{prop:perron} applies with $k=0$: all controls are bounded, any fixed
$\theta$ gives an admissible boundary feedback from every initial state,
and bounded rewards give stochastic sub- and supersolutions of the form
$\pm C(1+T-t)$.  The boundary Hamiltonian is bounded on bounded jets up to
$T$.  Hence $\cV$ is the viscosity solution.  Proposition
\ref{prop:compact-coercive} and Theorem \ref{thm:boundary-continuity} now give
\eqref{eq:redundant-boundary-limit}.

The assumptions on $\Psi$ make $w$ nonconstant.  If
$\Psi(p)=(1+\tanh p)/2$ and $G(p,q)=\tanh q$, then the terminal value of $\cV$
is nonconstant.  Continuity at $T$ makes $\cV(t,\cdot,\cdot)$ nonconstant for
$t$ sufficiently close to $T$.
\end{proof}

\subsection{A smooth target without the local lower bound}
\label{sec:failure-lower-bound}

Let there be no $x$ variable and consider the target problem
\[
 dY_s=a_s\,ds+a_s^2\,dW_s,\qquad a_s\in[0,1],
 \qquad Y_T\ge0\quad\hbox{a.s.}
\]
Its target value is $w(t)=0$.  The control $a=0$ makes every $y\ge0$
feasible.  Conversely, for $\lambda>0$ let $\mathbb Q^\lambda$ be the
equivalent measure under which
$W_s-W_t+\lambda(s-t)$ is Brownian on $[t,T]$.  Boundedness of $a$ gives
\[
 0\le\EE^{\mathbb Q^\lambda}[Y_T]
 =y+\EE^{\mathbb Q^\lambda}\int_t^T(a_s-\lambda a_s^2)\,ds
 \le y+\frac{T-t}{4\lambda}
\]
for every feasible control.  Letting $\lambda\to\infty$ proves $y\ge0$.
Thus the viable epigraph is $Y\ge0$, and $R=Y$ is its surplus coordinate.
The coefficients satisfy $\mu(a)=a$ and $\eta(a)=a^2$, so $A_0=\{0\}$ and
$\mu=0$ whenever $\eta=0$.  Nevertheless, for $\lambda\ge1/4$,
\[
 \inf_{a\in[0,1]}\{-\lambda a+\lambda^2 a^4\}
 =-\frac{3}{4^{4/3}}\lambda^{2/3}\longrightarrow-\infty.
\]
The local lower bound therefore fails despite smoothness of $w$.

For the continuation problem, impose $R_s\ge0$, zero running reward, and
$\Phi(r)=r/(1+r)$.  The boundary-preserving value is $\cV(t)=0$.  To obtain
an admissible control with a positive payoff even from $R_t=0$, let
\[
 dZ_s=(\tfrac12-\tfrac18 Z_s)\,ds+\tfrac12 Z_s\,dW_s,
 \qquad Z_t=0,
 \qquad \tau=\inf\{s\ge t:Z_s=1\}\wedge T.
\]
The explicit solution is
$Z_s=\tfrac12 E_s\int_t^s E_u^{-1}\,du$, where
$E_s=\exp((W_s-W_t)/2-(s-t)/4)$, so $Z_s>0$ for $s>t$.
Set $a_s=Z_s\mathbf1_{\{s\le\tau\}}$.  This control takes values in
$[0,1]$, and It$\hat{o}$'s formula gives, for every initial surplus $r\ge0$,
\[
 R_s=r+Z_{s\wedge\tau}^2\ge0,
 \qquad dR_s=a_s\,ds+a_s^2\,dW_s.
\]
In particular, the target boundary is not absorbing.  Since $Z_\tau>0$
almost surely for $t<T$, bounded convergence yields
\[
 \liminf_{r\downarrow0}V(t,r)
 \ge\EE\left[\frac{Z_\tau^2}{1+Z_\tau^2}\right]>0=\cV(t).
\]
When the local lower bound fails, the interior value can therefore exceed the
value obtained by controls that keep the process on the boundary, even when
the target value is smooth.

The lower bound is sufficient, not necessary: with zero terminal payoff both
values vanish although it still fails.

\section{Conclusion}\label{sec:conclusion}

The target-generated change of variables converts a viable epigraph into a
half-space state constraint and turns the controls that preserve its boundary
into a boundary control problem.  Theorem
\ref{thm:boundary-continuity} proves that the value converges to the solution of
the resulting boundary HJB under one set of coefficient conditions.  At zero
surplus the drift condition is equivalent to the local lower bound of
Bouchard, Elie and Imbert \cite{BEI2010}; the signed estimates of Alvarez,
Ekren and Huang \cite{AEH2026} imply it near each endpoint.  Proposition
\ref{prop:compact-coercive} verifies the coefficient conditions for compact
or coercive controls under the local lower bound and continuity of $A_0$.
Direct generator estimates also cover the noncompact endpoint problems of
\cite{AEH2026} and the expectation-constraint model of \cite{BGHMP2026}.

\appendix
\section{Terminal limit}\label{app:terminal-limit}

\begin{lemma}[Terminal limit from a state estimate]\label{lem:terminal-state-limit}
Suppose the admissible control families are nonempty, $f$ and $\Phi$ are
bounded, and $\Phi$ is uniformly continuous.  Then
\eqref{eq:terminal-state-estimate} implies
\eqref{eq:terminal-envelope-condition}.
\end{lemma}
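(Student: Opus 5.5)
We prove the two inequalities in \eqref{eq:terminal-envelope-condition} separately. Fix $x\in\RR^d$ and $\epsilon>0$. Let $\omega$ be a modulus of continuity for $\Phi$, and set $C_f:=\sup|f|$ and $C_\Phi:=\sup|\Phi|$. Take points $(t,y,r)$ with $T-\delta\le t<T$, $|y-x|\le\delta$ and $0<r\le\delta$. Any admissible control $a\in\cA_{t,y,r}$ then gives
\[
 \Bigl|\EE\Bigl[\int_t^T f\,ds+\Phi(X_T,R_T)\Bigr]-\Phi(x,0)\Bigr|
 \le C_f(T-t)+\EE\bigl|\Phi(X_T,R_T)-\Phi(x,0)\bigr|.
\]

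The main step is to convert the $L^1$ estimate \eqref{eq:terminal-state-estimate} into control of $\EE|\Phi(X_T,R_T)-\Phi(x,0)|$. We do this with the elementary bound, valid for every $\gamma>0$,
\[
 |\Phi(z)-\Phi(z_0)|\le\omega(\gamma)+\frac{2C_\Phi}{\gamma}\,|z-z_0|.
\]
It holds because either $|z-z_0|\le\gamma$, or else the left side is at most $2C_\Phi\le 2C_\Phi|z-z_0|/\gamma$. Apply it with $z=(X_T,R_T)$ and $z_0=(x,0)$, noting that $R_T\ge0$ gives $|z-z_0|\le|X_T-x|+R_T$. Take expectations and the supremum over admissible controls to obtain
\[
 \sup_{a\in\cA_{t,y,r}}\bigl|J(t,y,r;a)-\Phi(x,0)\bigr|
 \le C_f\delta+\omega(\gamma)+\frac{2C_\Phi}{\gamma}\,\rho(\delta),
\]
where $J(t,y,r;a)$ denotes the expected reward of $a$ from $(t,y,r)$, and $\rho(\delta)$ is the double supremum in \eqref{eq:terminal-state-estimate}, so that $\rho(\delta)\to0$.

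Because the admissible families are nonempty, $V(t,y,r)$ is a supremum over a nonempty set of such values. The displayed bound therefore gives
\[
 |V(t,y,r)-\Phi(x,0)|\le C_f\delta+\omega(\gamma)+2C_\Phi\rho(\delta)/\gamma .
\]
This bounds $V$ from above and from below, since every admissible control satisfies the same two-sided estimate.

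To conclude, choose $\gamma$ with $\omega(\gamma)<\epsilon$, and then $\delta$ small enough that the remaining terms are below $\epsilon$. The points $(s,y,r)\to(T,x,0)$ with $r>0$ entering the definitions of $\overline V(T,x)$ and $\underline V(T,x)$ eventually satisfy $s<T$ and lie in the $\delta$-box, since $V$ is defined only for $s<T$. Hence $\overline V(T,x)\le\Phi(x,0)+2\epsilon$ and $\underline V(T,x)\ge\Phi(x,0)-2\epsilon$. Letting $\epsilon\downarrow0$ gives \eqref{eq:terminal-envelope-condition}.

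No localization is needed, because $\Phi$ is uniformly continuous on the whole space. The only delicate point is the truncation via $\gamma$, which replaces a convergence-in-probability argument that would have to be uniform over controls.
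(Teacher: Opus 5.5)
Your proof is correct and follows essentially the same route as the paper. Your pointwise bound $|\Phi(z)-\Phi(z_0)|\le\omega(\gamma)+2C_\Phi|z-z_0|/\gamma$ is the integrand-level form of the paper's argument, which splits the terminal payoff according to $D^a\le\varepsilon$ and then applies Markov's inequality; in both, $\delta\downarrow0$ is taken before the truncation level is released.
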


\begin{proof}
Fix $x$.  Let $m_\delta(x)\to0$ be the supremum in
\eqref{eq:terminal-state-estimate} and $\omega_\Phi$ a modulus of continuity
of $\Phi$.  For a starting point $(t,y,r)$ in that supremum and an admissible
control $a$, write $D^a=|X_T^{t,y,r,a}-x|+R_T^{t,y,r,a}$ and let
$J^a(t,y,r)$ be its expected payoff.  Since $R_T\ge0$, splitting the terminal
payoff according to $D^a\le\varepsilon$ and using Markov's inequality gives
\begin{align*}
 |J^a(t,y,r)-\Phi(x,0)|
 &\le\delta\|f\|_\infty+\omega_\Phi(\varepsilon)
       +2\|\Phi\|_\infty\PP(D^a>\varepsilon)\\
 &\le\delta\|f\|_\infty+\omega_\Phi(\varepsilon)
       +\frac{2\|\Phi\|_\infty}{\varepsilon}m_\delta(x).
\end{align*}
Nonemptiness of the control family gives the same bound for
$|V(t,y,r)-\Phi(x,0)|$.  Letting $\delta\downarrow0$ and then
$\varepsilon\downarrow0$ proves
$\overline V(T,x)=\underline V(T,x)=\Phi(x,0)$.
\end{proof}

\bibliographystyle{siamplain}
\bibliography{v5}

\end{document}